\documentclass{article}

\usepackage{amssymb}
\usepackage{amsmath}
\usepackage{amsfonts}
\usepackage{amsthm}
\usepackage{mathabx}
\usepackage{graphicx}
\usepackage{cite}
\usepackage[breaklinks=true]{hyperref}
\usepackage{graphicx}
\usepackage{tikz}
\usepackage{placeins}
\usepackage{float}
\usepackage{pstricks}
\usepackage{tikz-cd}

\usetikzlibrary{arrows,chains,matrix,positioning,scopes}

\newtheorem{thm}{Theorem}[section]
\newtheorem{theorem}[thm]{Theorem}
\newtheorem{lem}[thm]{Lemma}

\newtheorem{prop}[thm]{Proposition}

\newtheorem{conj}[thm]{Conjecture}
\newtheorem{question}{Question}
\newtheorem{thmm}{Theorem}

\newtheorem{thmk}{Theorem}

\theoremstyle{definition}
\newtheorem{definition}[thm]{Definition}

\newtheorem*{example}{Example}

\theoremstyle{remark}

\newtheorem*{prop proof}{Proof of Proposition}

\newcommand{\RR}{\mathbb{R}}      
\newcommand{\ZZ}{\mathbb{Z}}        

\newcommand{\cH}{\mathcal{H}}
\newcommand{\hyp}{\mathfrak{h}}

\newcommand{\minset}{{\rm{Min}}}

\numberwithin{equation}{section}

\title{Groups with arbitrary cubical dimension gap}

\author{Robert Kropholler and Chris O'Donnell}

\begin{document}
\maketitle

\begin{abstract}
	We prove that if $G = G_1\times\dots\times G_n$ acts essentially, properly and cocompactly on a CAT(0) cube complex $X$, then the cube complex splits as a product. We use this theorem to give various examples of groups for which the minimal dimension of a cube complex the group acts on is strictly larger than that of the minimal dimension of a CAT(0) space upon which the group acts. 
\end{abstract}

\section{Introduction}

The purpose of this paper is to study splitting theorems for CAT(0) cube complexes with a proper, cocompact, essential action by a product of two groups. We use this to give a sequence of groups with arbitrary dimension gaps. 

There are many splitting results for actions of products on CAT(0) spaces. See for instance, \cite[p.239 Theorem 6.21]{bridsonHaefliger} and the preceeding discussion as well as \cite{monod} for a more recent work. We prove a version for CAT(0) cube complexes.

We say that the $G$ has {\em the Abelian intersection property (AIP)} if there is a sequence of highest Abelian subgroups $A_1, \dots, A_n<G$ such that $\bigcap A_i = \{e\}$.

\begin{thmm}
	Let $G_1, \dots, G_n$ be finitely generated groups with AIP. Suppose that $G = G_1\times\dots\times G_n$ acts on a CAT(0) cube complex $X$ properly, cocompactly and essentially. Then $X = X_1\times \dots\times X_n$ and $G_i$ acts properly cocompactly on $X_i$.  
\end{thmm}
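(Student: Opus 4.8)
The plan is to translate the problem into the combinatorics of hyperplanes and to read the product decomposition off the commuting structure of $G$. Recall that a finite-dimensional CAT(0) cube complex is determined by its pocset of halfspaces, and that the relation of \emph{non-transversality} on the set of hyperplanes $\mathcal H$ (declare $\hat a, \hat b$ related when they are nested or parallel rather than crossing) has connected components $\mathcal H = \bigsqcup_k \mathcal H^{(k)}$ whose dual sub-pocsets realise the canonical splitting $X = \prod_k Y_k$ into irreducible factors; here every hyperplane of $\mathcal H^{(k)}$ is transverse to every hyperplane of $\mathcal H^{(\ell)}$ when $k \ne \ell$. Since $\mathrm{Aut}(X)$ permutes the $Y_k$, after passing to a finite-index subgroup $G_0 \le G$ containing a product $\prod_i G_i'$ of finite-index subgroups $G_i' \le G_i$, we may assume each factor $Y_k$ is preserved and that the induced action of $G_0$ on it is essential. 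The whole problem then reduces to \emph{matching}: showing that each irreducible factor $Y_k$ is acted on essentially by exactly one of the group factors $G_i'$, with the remaining factors fixing a point of $Y_k$.

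The geometric engine is a transversality principle for commuting hyperbolic isometries. Suppose $a \in G_i'$ and $b \in G_j'$ with $i \ne j$; then $[a,b]=e$, and if both are hyperbolic with no common power they generate a subgroup $\langle a,b\rangle \cong \ZZ^2$, which by the Flat Torus Theorem acts cocompactly on a $2$-flat $F \subseteq \minset\langle a,b\rangle$. The hyperplanes crossing $F$ fall into two families, dual to the two independent translation directions of $a$ and $b$, and hyperplanes from different families are transverse. Consequently any hyperplane skewered by $a$ is transverse to any hyperplane skewered by $b$. This is the mechanism that forbids two distinct group factors from skewering a ``shared'' family of hyperplanes.

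Now fix an irreducible factor $Y_k$. By essentiality and the Double Skewering Lemma of Caprace--Sageev, every hyperplane of $Y_k$ is skewered by some element of $G_0$, and I would attribute it to a group factor according to which $G_i'$ supplies the skewerer. If two factors $G_i'$ and $G_j'$ both skewered hyperplanes of $Y_k$, the transversality principle would make the corresponding two families mutually transverse and, since together they exhaust $\mathcal H^{(k)}$, would exhibit $Y_k$ as a nontrivial product --- contradicting irreducibility. Hence a single factor, say $G_{i(k)}'$, skewers all of $\mathcal H^{(k)}$, and every other factor acts on the CAT(0) space $Y_k$ with bounded orbits, hence fixes a point. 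Grouping the factors by their owner, set $X_i = \prod_{k : i(k)=i} Y_k$, so that $X = X_1 \times \dots \times X_n$ with $G_i'$ acting on $X_i$ and fixing a point in every $X_j$, $j \ne i$. Here the role of the AIP is decisive: its highest Abelian subgroups produce, inside each $G_i$, the supply of independent flats needed to guarantee that $G_i'$ genuinely skewers a nonempty set of hyperplanes (so that $X_i$ is not spuriously trivial), while the condition $\bigcap A_m = \{e\}$ is what rules out a hyperplane being essential for two factors at once, making the attribution $k \mapsto i(k)$ well defined and exhaustive.

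Finally, properness and cocompactness descend. Writing the $\prod_i G_i'$-action on $X = \prod_i X_i$ as a product action (using that $G_j'$ fixes points of $X_i$ for $j \ne i$), cocompactness and properness of the ambient $G$-action force each $G_i'$ to act properly and cocompactly on $X_i$; since $G_i'$ is finite index in $G_i$ and the decomposition is $G$-equivariant, passing back to $G_i$ preserves both properties. The step I expect to be the main obstacle is the matching argument of the third paragraph: the transversality principle only controls pairs of \emph{commuting} hyperbolic elements with independent axes, so the real work is to show that the attribution of hyperplanes to factors covers \emph{all} of $\mathcal H^{(k)}$ --- handling hyperplanes that are only skewered by elements with support in several factors, and confirming via the trivial-intersection hypothesis that the ``losing'' factors act with genuinely bounded orbits rather than merely inessentially.
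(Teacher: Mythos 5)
Your overall skeleton (attribute hyperplanes, or irreducible factors, to group factors, then descend properness and cocompactness) parallels the paper's, but the engine you rely on --- the ``transversality principle'' --- is false as you have derived it, and the place where it fails is exactly where AIP has to do real work. The Flat Torus Theorem gives a flat $F$ for $\langle a,b\rangle\cong\ZZ^2$, but it does not align the families of hyperplanes crossing $F$ with the translation directions of your chosen generators: the flat can sit diagonally with respect to the hyperplanes. Concretely, let $\langle a\rangle\times\langle b\rangle$ act on the standard cubulation of $\RR^2$ with $a$ translating by $(1,0)$ and $b$ by $(1,1)$; both are hyperbolic, commute, and have no common power, yet $b$ skewers every hyperplane that $a$ skewers (and more), so ``any hyperplane skewered by $a$ is transverse to any hyperplane skewered by $b$'' fails --- indeed it would force a hyperplane to be transverse to itself. (That $\ZZ\times\ZZ$ lacks AIP is no defence: your derivation of the principle uses only commutation and the Flat Torus Theorem, never AIP, so the derivation cannot be correct.) The paper's substitute is Lemma \ref{lemma:quasilines}: a highest abelian subgroup $A<G$ splits as $A_1\times\dots\times A_n$ with $A_i<G_i$, and --- this is where AIP enters, via the cubical flat torus theorem of \cite{wise_cubical} and its Theorem 7 on intersections of standard abelian subgroups --- each $A_i$ has a finite index subgroup acting only on its own subproduct of the quasi-lines associated to $A$, and trivially on the rest. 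That is the mechanism that forbids the diagonal behaviour above; nothing in your write-up replaces it.

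The second gap is the one you flag yourself in your closing sentence: hyperplanes skewered only by elements $(g_1,\dots,g_n)$ supported on several factors. For these your attribution $k\mapsto i(k)$ is simply undefined, so your families need not exhaust $\cH^{(k)}$ and the irreducibility contradiction never gets off the ground; this is not a loose end but the central step of the whole theorem. The paper handles it by taking highest abelian subgroups $A_i<G_i$ containing powers of each component $g_i$, forming $A=A_1\times\dots\times A_n$, observing that the hyperplane crosses the flat of $A$ and is therefore dual to a single quasi-line of the associated product of quasi-lines, and then invoking Lemma \ref{lemma:quasilines} to conclude that the elements translating along that quasi-line lie (up to finite index) in a single $A_i$ --- whence the hyperplane is skewered by $g_i$ for exactly one $i$, and the partition $\cH(X)=\cH_1\sqcup\dots\sqcup\cH_n$ is both well defined and exhaustive. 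Once that partition is in place, your concluding paragraph on bounded orbits, properness and cocompactness is essentially the paper's argument and is fine; without it, the proof does not stand.
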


By \cite{caprace_cubical} that we can always restrict to a subcomplex where the action is essential, thus this assumption should not be troubling. 

We use the AIP property heavily throughout and conjecture that any CAT(0) group with finite center in all finite index subgroups has AIP. 

We use this cubical splitting theorem to obtain groups with arbitrary gaps between the CAT(0) dimension and cubical dimension. Examples of this kind have previously been given by \cite{jankiewicz} using small cancellation theory. We build on the work of \cite{bridsondimension} to give examples with explicit complexes realising both the CAT(0) dimension and the cubical dimension. Namely, we prove the following:

\begin{thmm}
	For each $n\geq 1$. There exists a group of CAT(0) dimension $2n$ but cubical dimension $3n$. 
\end{thmm}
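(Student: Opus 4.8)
The plan is to reduce the whole family to a single base group and then pass to direct powers, using Theorem A to pin down the cubical dimension from below. First I would fix a torsion-free group $G_0$ of CAT(0) dimension $2$ and cubical dimension $3$, building on the explicit construction of \cite{bridsondimension}; the role of ``building on'' that work is to produce simultaneously an explicit $2$-dimensional CAT(0) complex on which $G_0$ acts geometrically, an explicit $3$-dimensional CAT(0) cube complex on which it acts properly and cocompactly, and a proof that no $2$-dimensional cube complex admits such an action. I would also verify that $G_0$ enjoys the AIP, so that Theorem A is applicable to powers of $G_0$. With this base case established, set $G = G_0 \times \dots \times G_0$ with $n$ factors.

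The two upper bounds are then immediate from products. The $n$-fold product of the $2$-dimensional CAT(0) complexes carries a geometric action of $G$ and has dimension $2n$, while the $n$-fold product of the $3$-dimensional cube complexes carries a proper cocompact action of $G$ and has dimension $3n$. Hence the CAT(0) dimension of $G$ is at most $2n$ and its cubical dimension is at most $3n$.

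For the matching lower bound on the CAT(0) dimension I would argue cohomologically. Since $G$ is torsion-free, any proper action on a contractible CAT(0) space is free, so the quotient is a classifying space and the dimension of any such space is at least $\operatorname{cd}(G)$. As $G_0$ is of type $FP$ with nonvanishing top-dimensional cohomology, cohomological dimension is additive over direct products, giving $\operatorname{cd}(G) = n\cdot\operatorname{cd}(G_0) = 2n$; this meets the upper bound, so the CAT(0) dimension equals $2n$.

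The heart of the argument is the lower bound on the cubical dimension, and this is exactly where Theorem A is used. Suppose $G$ acts properly and cocompactly on a CAT(0) cube complex $X$. By \cite{caprace_cubical} I may pass to the essential core, a convex subcomplex $X' \subseteq X$ on which the induced action is essential, proper and cocompact, and for which $\dim X' \leq \dim X$. Applying Theorem A to $X'$ (legitimate because each factor $G_0$ has the AIP) yields a splitting $X' = X_1 \times \dots \times X_n$ with $G_0$ acting properly and cocompactly on each $X_i$. Each $X_i$ is then a CAT(0) cube complex admitting a geometric $G_0$-action, so $\dim X_i \geq 3$ by the defining property of $G_0$; summing gives $\dim X \geq \dim X' = \sum_i \dim X_i \geq 3n$, so the cubical dimension equals $3n$. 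The main obstacles feed into precisely this last step: establishing the base case that the cubical dimension of $G_0$ is genuinely $3$ and not $2$, which is the delicate geometric input inherited from the methods of \cite{bridsondimension}, and confirming that $G_0$ satisfies the AIP so that Theorem A genuinely applies. The reduction to essential actions and the additivity of dimension over cubical products are comparatively routine.
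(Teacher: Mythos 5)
Your proposal is correct and follows essentially the same route as the paper: fix the base group from \cite{bridsondimension} with CAT(0) dimension $2$, cubical dimension $3$ and AIP, take its $n$-fold power, get the upper bounds from products of the explicit complexes, the CAT(0) lower bound from cohomological dimension, and the cubical lower bound by passing to the essential core and applying the product decomposition theorem factorwise. The only work you defer --- that the base group's cubical dimension is genuinely $3$ and that it has AIP --- is exactly what the paper establishes separately (via Propositions~\ref{prop:commutatortree}--\ref{cubicaldim3}) before giving the same short product argument.
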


We also use this theorem to construct manifold examples. In this case the manifolds acting properly on a product of copies of $\mathbb{H}^3$. This realises their geometric dimension. While we can bound the cubical dimension from below, in these examples we do not have explicit cubulations. However, the groups are cubulated by \cite{bergeronwise}. 

\begin{thmm}
	There are closed aspherical manifolds whose fundamental groups have arbitrary dimension gaps. 
\end{thmm}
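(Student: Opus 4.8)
The plan is to realise the required manifolds as products of closed hyperbolic $3$-manifolds. Fix closed hyperbolic $3$-manifolds $M_1,\dots,M_n$ and set $M = M_1\times\dots\times M_n$, a closed aspherical manifold of dimension $3n$ with $\pi_1(M) = \Gamma_1\times\dots\times\Gamma_n$, where $\Gamma_i = \pi_1(M_i)$. Each $\Gamma_i$ acts geometrically on $\mathbb{H}^3$, so $\Gamma := \pi_1(M)$ acts geometrically on the product $(\mathbb{H}^3)^n$, a CAT(0) space of dimension $3n$. Since the geometric dimension of a group is a lower bound for the dimension of any CAT(0) space admitting a proper cocompact action, and $\Gamma$ is the fundamental group of a closed aspherical $3n$-manifold (so $\operatorname{cd}\Gamma = 3n$), the CAT(0) dimension of $\Gamma$ is exactly $3n$, realising the geometric dimension as claimed.

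To bound the cubical dimension from below I would first verify that each $\Gamma_i$ has AIP. As $\Gamma_i$ is torsion-free and word hyperbolic, any two distinct maximal cyclic subgroups intersect trivially, and maximal cyclic subgroups are highest Abelian subgroups; hence two of them furnish the required family with trivial intersection. Now let $\Gamma$ act properly and cocompactly on a CAT(0) cube complex $X$; such actions exist, since each $\Gamma_i$ is virtually special by \cite{bergeronwise}, hence cocompactly cubulated. Restricting to the essential core, which by \cite{caprace_cubical} carries a proper cocompact action and satisfies $\dim X^{\mathrm{ess}} \le \dim X$, Theorem~A gives a splitting $X^{\mathrm{ess}} = X_1\times\dots\times X_n$ with each $\Gamma_i$ acting properly and cocompactly on $X_i$. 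Therefore
\[
\dim X \;\ge\; \dim X^{\mathrm{ess}} \;=\; \sum_{i=1}^n \dim X_i \;\ge\; \sum_{i=1}^n \operatorname{cubdim}(\Gamma_i),
\]
so it remains to bound $\operatorname{cubdim}(\Gamma_i)$ from below.

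The heart of the argument, and the step I expect to be the main obstacle, is the claim that a closed hyperbolic $3$-manifold group has cubical dimension at least $4$, strictly larger than its cohomological dimension $3$. Suppose for contradiction that $\Gamma_i$ acts properly and cocompactly on a $3$-dimensional CAT(0) cube complex $Y$. Being torsion-free, $\Gamma_i$ acts freely, so $Y$ is a contractible $3$-complex and $Y/\Gamma_i$ is a compact aspherical $K(\Gamma_i,1)$. Since $\Gamma_i$ is a $\mathrm{PD}(3)$ group, I would argue that Poincaré duality forces $Y$ to be a homology $3$-manifold: the top compactly supported cohomology $H^3_c(Y)\cong\mathbb{Z}$ should constrain each $2$-cube to lie in exactly two $3$-cubes, whence every vertex link is a flag triangulation of $S^2$ and $Y$ is a genuine $3$-manifold. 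One then obtains a closed aspherical $3$-manifold $Y/\Gamma_i$ homotopy equivalent to $M_i$; by geometrization and rigidity it is homeomorphic to $M_i$, equipping the hyperbolic manifold $M_i$ with a nonpositively curved cubical structure. Ruling out such a structure is the delicate point: it is genuinely subtle because flag triangulations of $S^2$ need not be the octahedron (the icosahedron is flag and $5$-large), so curvature considerations alone do not suffice, and the contradiction must instead exploit Mostow rigidity together with the incompatibility of the constant-curvature metric with the combinatorics of a $3$-dimensional cubulation. I expect the bulk of the effort, and the place where a careful new argument is needed, to lie exactly here.

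Granting the claim, $\operatorname{cubdim}(\Gamma_i)\ge 4$ for each $i$, so the displayed inequality yields $\operatorname{cubdim}(\Gamma)\ge 4n$, while the CAT(0) (and geometric) dimension of $\Gamma$ is $3n$. Thus $M$ is a closed aspherical manifold whose fundamental group has a gap of at least $n$ between its cubical and CAT(0) dimensions, and letting $n$ vary produces arbitrarily large dimension gaps, as required.
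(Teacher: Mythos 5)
Your global architecture is the same as the paper's: take a product of hyperbolic $3$-manifolds, observe the CAT(0) dimension is $3n$ via $(\mathbb{H}^3)^n$, check AIP for the factors (non-elementary hyperbolic, as in the paper's Lemma), and use the product decomposition theorem to reduce the cubical lower bound to a lower bound for each factor. However, there is a genuine gap exactly at the step you flag as ``the heart of the argument'': the claim that a closed hyperbolic $3$-manifold group has cubical dimension at least $4$. You assert this for \emph{arbitrary} closed hyperbolic $3$-manifolds and do not prove it; worse, in that generality it is not merely open but effectively false as a strategy --- some closed hyperbolic $3$-manifolds are homeomorphic to $3$-dimensional nonpositively curved cube complexes (e.g.\ via the Aitchison--Rubinstein-type cubulations coming from immersed surfaces satisfying the $4$-plane and $1$-line properties), and for those the cubical dimension is exactly $3$. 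Your proposed contradiction also cannot close: even granting that a free cocompact action on a $3$-dimensional CAT(0) cube complex forces the complex to be a manifold, the conclusion is only that $M_i$ carries an NPC cubical structure, and Mostow rigidity gives no obstruction to a hyperbolic manifold carrying a second, cubical, metric. So the argument terminates without a contradiction, as you yourself acknowledge.

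The paper avoids this trap by not using arbitrary hyperbolic $3$-manifolds. It takes $M$ to be a long Dehn filling of the figure-8 knot complement, using Tao Li's work: for these specific manifolds, any collection of translates of the relevant immersed planes contains $4$ that pairwise intersect transversely. The lower bound is then proved purely at the level of hyperplanes, never requiring the cube complex to be a manifold: a hyperplane stabiliser $H$ is quasiconvex with $\partial H \subsetneq S^2$ (essentiality), the domain of discontinuity and Ahlfors finiteness produce a quasiconvex surface subgroup $N \leq H$ stabilising a plane $C \cong \mathbb{H}^2$, Li's result gives $4$ pairwise-intersecting translates of $C$, hence $4$ pairwise-crossing translates of the hyperplane, and the Helly property for hyperplanes yields a $4$-cube, so the complex has dimension at least $4$. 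This choice of manifold plus the hyperplane/Helly argument is precisely the content your proposal is missing; with arbitrary factors $M_i$ the theorem you need at each factor does not hold.
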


Since we are using the product decomposition theorem to gain arbitrary gaps these manifolds will have arbitrarily large dimension. Using the results of \cite{jankiewicz} and the reflection trick of \cite{davis} we can construct 4-dimensional aspherical manifolds with arbitrary dimension gap. 

\begin{thmm}
	For each $n$, there is a group $G_n$ which is the fundamental group of an aspherical 4-manifold that does not act properly on any cube complex of dimension less than $n$. 
\end{thmm}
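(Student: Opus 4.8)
The plan is to combine two black boxes named in the discussion above: the small-cancellation groups with large cubical dimension from \cite{jankiewicz}, and the reflection group trick of \cite{davis}. Jankiewicz's work provides, for each $n$, a torsion-free group $J_n$ with a finite aspherical presentation $2$-complex $K_n = K(J_n,1)$ such that $J_n$ admits no proper action on a CAT(0) cube complex of dimension less than $n$. Davis's reflection trick is a machine which, from a compact aspherical manifold with boundary, produces a closed aspherical manifold of the \emph{same} dimension whose fundamental group retracts onto the fundamental group of the input. The group $G_n$ we seek will be the fundamental group of the closed aspherical $4$-manifold obtained by feeding a $4$-dimensional thickening of $K_n$ into this machine.

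The key structural observation is that cubical dimension is monotone under passage to subgroups: if $H \leq G$ and $G$ acts properly on a cube complex $X$, then the restricted action of $H$ on $X$ is again proper, so $\mathrm{cubdim}(H) \leq \mathrm{cubdim}(G)$. Consequently it suffices to realise $J_n$ as a \emph{subgroup} of a closed aspherical $4$-manifold group $G_n$; the lower bound from \cite{jankiewicz} is then inherited, giving $n \leq \mathrm{cubdim}(J_n) \leq \mathrm{cubdim}(G_n)$, which is exactly the assertion that $G_n$ acts properly on no cube complex of dimension less than $n$.

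First I would construct a compact aspherical $4$-manifold with boundary $W_n$ having $\pi_1(W_n) \cong J_n$. Since $J_n$ has cohomological dimension $2$, such a $W_n$ is precisely a $4$-dimensional regular-neighbourhood thickening of the $2$-complex $K_n$; it is aspherical because it is homotopy equivalent to $K_n$. I would then apply the reflection group trick of \cite{davis} to the pair $(W_n, \partial W_n)$, taking as mirror structure the barycentric subdivision of a triangulation of $\partial W_n$, so that the resulting nerve is a flag complex and the basic construction is aspherical. The output is a closed aspherical $4$-manifold $M_n$ together with an inclusion $J_n = \pi_1(W_n) \hookrightarrow \pi_1(M_n) =: G_n$ (split by a retraction, though only injectivity is needed). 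Setting $G_n = \pi_1(M_n)$ and invoking monotonicity completes the argument, and letting $n$ range over $\NN$ produces the desired family.

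The main obstacle is the dimension count: a general finite $2$-complex does \emph{not} thicken to a $4$-manifold, since embedding a $2$-complex in $\mathbb{R}^4$ is the classical critical case governed by a van Kampen-type obstruction in $H^4$ (one is below the metastable range, where two attached $2$-cells are forced to meet). The heart of the proof is therefore to arrange that Jankiewicz's complexes $K_n$ embed in $\mathbb{R}^4$, equivalently that this obstruction vanishes, for instance by selecting the small-cancellation relators so that their attaching curves bound pairwise disjoint spanning discs, or by stabilising the presentation of $J_n$ without changing the group until an embeddable presentation complex is reached. Once a $4$-dimensional aspherical thickening $W_n$ is in hand, everything downstream is formal, and I expect this embeddability step to be where the real work lies.
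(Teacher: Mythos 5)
Your proposal follows the same route as the paper's proof: Jankiewicz's groups with cubical dimension at least $n$, a compact aspherical $4$-dimensional thickening of their presentation complexes, the Davis reflection trick applied to that thickening, and monotonicity of proper cubical actions under passage to subgroups (the paper phrases this last step via the retraction of $\pi_1(M_n)$ onto $H_n$, but, as you note, injectivity is all that is used). The one place you diverge is in your assessment of the thickening step, which you call the heart of the proof and leave unresolved; in fact no new work is needed there. The paper disposes of it with a citation to Stallings, whose theorem on embedding homotopy types into manifolds shows that every finite $2$-complex is homotopy equivalent to a $2$-complex embedded in $\RR^4$; a regular neighbourhood of such an embedded copy is the desired compact aspherical $4$-manifold with boundary. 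The van Kampen obstruction you invoke only obstructs embedding the \emph{given} complex, not embedding its homotopy type, and asphericity of the thickening depends only on the homotopy type, which is all the Davis trick requires. Alternatively, one can bypass embedding theory entirely: build the thickening as a $4$-dimensional handlebody, namely a boundary connected sum of copies of $S^1\times D^3$ with $2$-handles attached along curves in the boundary $3$-manifold representing the relators. Finitely many curves in a $3$-manifold can be made embedded and pairwise disjoint by general position, and the resulting $4$-manifold is homotopy equivalent to the presentation complex regardless of the framings chosen. Either way, the step you expected to be the real work is standard, and the rest of your argument coincides with the paper's.
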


It is known that surface groups can be cubulated in dimension 2. This leaves the question in what happens in dimension 3. In particular, Genevieve Walsh asks the following question. 

\begin{question}
	Is there a family of hyperbolic 3-manifolds $M_n$ such that $\pi_1(M_n)$ does not act geometrically on a CAT(0) cube complex of dimension $<n$? 
\end{question}

We thank Genevieve Walsh for helpful comments. This project contains work included in the second author's PhD thesis undertaken at Tufts university.

\section{Preliminaries}

\begin{definition}
	An Abelian subgroup $A \leq G$ is {\em highest} if $A$ does not have a finite index subgroup that lies in an Abelian subgroup of higher rank.
\end{definition}

\begin{definition}
	A group $G$ has {\em the Abelian intersection property (AIP)} if there is a sequence of highest Abelian subgroups $A_1, \dots, A_n<G$ such that $\bigcap A_i = \{e\}$. 
\end{definition}

\begin{lem}
	Let $G$ be a CAT(0) group with AIP. Then $G$ has finite center. 
\end{lem}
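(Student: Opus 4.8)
The plan is to show that $Z(G)$ cannot contain an element of infinite order, and then to invoke finite generation of abelian subgroups of a CAT(0) group to conclude finiteness. First I would record the structural input: since $G$ acts properly and cocompactly on a CAT(0) space, every element acts as a semisimple isometry and, by the Flat Torus Theorem together with the resulting solvable subgroup theorem \cite[II.7]{bridsonHaefliger}, every abelian subgroup of $G$ is finitely generated (indeed virtually $\mathbb{Z}^k$). In particular $Z(G)$ is a finitely generated abelian group, so it suffices to prove that $Z(G)$ is torsion; a finitely generated torsion abelian group is finite.

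The heart of the argument is the interaction between central elements and a highest abelian subgroup. Let $A_1, \dots, A_n$ be the highest abelian subgroups witnessing the AIP, so that $\bigcap_i A_i = \{e\}$. Suppose toward a contradiction that some $z \in Z(G)$ has infinite order, and fix an index $i$. Because $z$ is central it commutes with every element of $A_i$, so $B_i := \langle A_i, z\rangle$ is again abelian. I claim a nonzero power of $z$ lies in $A_i$. Indeed, consider $\langle z\rangle \cap A_i \le \langle z \rangle \cong \mathbb{Z}$. If this intersection were trivial then $B_i/A_i \cong \mathbb{Z}$, so $\mathrm{rank}(B_i) = \mathrm{rank}(A_i) + 1$, and then $A_i$ itself (a finite index subgroup of itself) would lie in the abelian group $B_i$ of strictly larger rank, contradicting that $A_i$ is highest. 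Hence $\langle z\rangle \cap A_i$ is infinite, i.e. $z^{m_i} \in A_i$ for some $m_i \geq 1$.

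To finish, set $m = m_1 m_2 \cdots m_n \geq 1$. Then $z^m \in A_i$ for every $i$, so $z^m \in \bigcap_i A_i = \{e\}$, forcing $z^m = e$. This contradicts the assumption that $z$ has infinite order. Therefore $Z(G)$ contains no element of infinite order, i.e. it is torsion; combined with the finite generation noted above, $Z(G)$ is finite.

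The step I expect to be the main obstacle is securing the finite generation of $Z(G)$ cleanly, since this is where the CAT(0) (rather than purely group-theoretic) hypothesis is really used: one must know that cocompactness forces semisimplicity of every isometry, so that the Flat Torus and solvable subgroup machinery applies and rules out non-finitely-generated abelian subgroups such as $\mathbb{Q}$ or $\mathbb{Z}[1/p]$. Once that is in place, the combinatorial core — the rank-increase argument against ``highest'', then intersecting via the AIP — is short and essentially forced.
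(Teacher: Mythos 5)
Your proposal is correct and follows essentially the same route as the paper's proof: an infinite-order central element must have a power inside every highest abelian subgroup (since otherwise adjoining it would violate ``highest''), so some power survives in the intersection $\bigcap_i A_i$, contradicting AIP, while the CAT(0) hypothesis supplies the finite generation of abelian subgroups needed to pass between ``infinite'' and ``contains an infinite-order element.'' You merely make explicit two steps the paper leaves implicit (the rank-increase argument and the finitely-generated-torsion-implies-finite conclusion), which is a faithful filling-in rather than a different approach.
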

\begin{proof}
	Suppose that the center is infinite. Since the group $G$ is CAT(0), the center has an element $g$ of infinite order. Every highest abelian subgroup $A$ contains $g^n$, where $n$ depends on $A$. The intersection of finitely many highest abelian subgroups will thus contain some power of $g$. 
\end{proof}

\begin{lem}
	If $G$ is a non-elementary hyperbolic group, then $G$ has AIP. 
\end{lem}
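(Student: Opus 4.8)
The plan is to exhibit two infinite cyclic subgroups whose intersection is trivial and to verify that cyclic subgroups are automatically highest in this setting. Recall the standard structural facts for a word-hyperbolic group $G$: every element of infinite order is loxodromic, with attracting and repelling fixed points $g^{+},g^{-}$ on $\partial G$, and every Abelian subgroup is virtually cyclic, so $G$ contains no copy of $\mathbb{Z}^2$ and the maximal rank of an Abelian subgroup is $1$.

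First I would observe that any infinite cyclic subgroup $\langle g\rangle$ is highest. Indeed $\langle g\rangle\cong\mathbb{Z}$ has rank $1$, and any finite-index subgroup is again infinite cyclic of rank $1$; since $G$ contains no Abelian subgroup of rank $\geq 2$, there is no higher-rank Abelian subgroup for such a finite-index subgroup to lie inside, so the highest condition holds vacuously. Thus every infinite cyclic subgroup is a highest Abelian subgroup.

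Next, using that $G$ is non-elementary, I would invoke the standard fact (via the Tits alternative and ping-pong for hyperbolic groups) that $G$ contains two independent loxodromic elements $g,h$, meaning that their fixed-point pairs $\{g^{+},g^{-}\}$ and $\{h^{+},h^{-}\}$ on $\partial G$ are disjoint. Setting $A_1=\langle g\rangle$ and $A_2=\langle h\rangle$, both are infinite cyclic and hence highest Abelian by the previous paragraph.

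Finally I would check that $A_1\cap A_2=\{e\}$. Any common nontrivial element would be of the form $g^{a}=h^{b}$ with $a,b\neq 0$; but $g^{a}$ is loxodromic with fixed-point set $\{g^{+},g^{-}\}$ while $h^{b}$ is loxodromic with fixed-point set $\{h^{+},h^{-}\}$, and equality of the group elements forces equality of their fixed-point sets, contradicting independence. Hence $A_1\cap A_2=\{e\}$ and $G$ has AIP. The substance of the argument lies entirely in the two cited structural inputs, namely the absence of $\mathbb{Z}^2$ (which makes cyclic subgroups highest) and the existence of independent loxodromics (which forces the trivial intersection); once these are in hand no real obstacle remains. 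In particular, possible torsion in $G$ causes no difficulty, since the subgroups $A_1,A_2$ are chosen to be torsion-free cyclic.
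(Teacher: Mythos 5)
Your proposal is correct and follows essentially the same route as the paper: the paper also takes two ``independent'' infinite-order elements (phrased there as two elements generating a free subgroup), notes that the cyclic subgroups they generate are highest Abelian subgroups since a hyperbolic group contains no $\ZZ^2$, and observes that these intersect trivially. Your write-up merely fills in the standard details (loxodromic fixed-point sets on $\partial G$, virtual cyclicity of Abelian subgroups) that the paper leaves implicit.
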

\begin{proof}
	Let $g_1, g_2$ be two elements that generate a free group in $G$. Then $\langle g_i\rangle$ are highest abelian subgroups and they intersect trivially. 
\end{proof}

In what follows we will prove a product decomposition theorem for groups with AIP. As such it is of interest to know when CAT(0) groups have AIP. We pose the following conjecture. 

\begin{conj}
	Suppose that $G$ is a CAT(0) group. Suppose that $G$ and every finite index subgroup have finite center. Then $G$ has AIP. 
\end{conj}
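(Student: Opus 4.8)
The plan is to prove the contrapositive: if $G$ fails to have AIP, then some finite-index subgroup has infinite center. Fix a geometric action of $G$ on a CAT(0) space $X$. By the Flat Torus Theorem \cite{bridsonHaefliger} every abelian subgroup is virtually $\ZZ^r$ and acts cocompactly on an $r$-flat, and the ranks of abelian subgroups are bounded above by the geometric dimension of $X$; in particular every abelian subgroup is contained, up to commensuration, in a highest one. Now suppose no finite family of highest abelian subgroups intersects trivially. Building a nested family $A_1, A_2, \dots$ of highest abelian subgroups and setting $I_k = A_1 \cap \dots \cap A_k$, the ranks $\operatorname{rank}(I_k)$ are non-increasing and bounded below, hence stabilize at some $r_\infty$. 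The heart of the matter is to show $r_\infty = 0$ and then to remove the residual finite part.

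Suppose first that $r_\infty \geq 1$. Then there is an infinite abelian $B \leq G$ (a finite-index subgroup of some $I_k$) that meets every highest abelian subgroup in a subgroup of finite index in $B$; equivalently, a fixed power of any given infinite-order element $h$ commutes with a power of a chosen infinite-order $z \in B$, because $h$ lies (up to commensuration) in a highest abelian subgroup that virtually contains $z$. Geometrically this says the translation axis of $z$ is parallel to a direction inside every maximal flat of $X$. I would then argue, via the parallel-set and splitting machinery of the Flat Torus Theorem together with the canonical product decomposition of \cite{monod}, that the endpoint $z^{+\infty} \in \partial X$ is fixed by a finite-index subgroup $H$ and that $X$ splits $H$-equivariantly as $X' \times \RR$ with $H$ acting on the $\RR$-factor by translations along the axis of $z$. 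A finite-index subgroup of $H$ then acts trivially on $X'$ through $z$, making a power of $z$ an infinite-order central element, contradicting the hypothesis that every finite-index subgroup has finite center.

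Once $r_\infty = 0$, the stabilized intersection is a finite group $\Phi$, and it remains to show that no nontrivial $t \in \Phi$ lies in every highest abelian subgroup. Any infinite-order $g$ with $t \notin \langle g\rangle$ can be enlarged to a highest abelian subgroup, and producing one that also avoids $t$ uses that $G$ is not virtually abelian (which follows from applying the finite-center hypothesis to finite-index subgroups, ruling out $G$ virtually $\ZZ^n$), so that there are axial elements whose flats meet the fixed-point set of $t$ trivially. I expect the main obstacle to be the geometric step of the previous paragraph: upgrading \emph{the axis of $z$ is parallel into every maximal flat} to a genuine $H$-equivariant Euclidean splitting. The difficulty is twofold, in that the commuting powers depend on $h$, so one only controls $z$ up to commensuration, and one must rule out that $z^{\pm\infty}$ are merely parabolic or that the compatible directions fail to assemble into a single global de Rham factor. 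Resolving this cleanly is exactly where I would expect to need either the full Caprace--Monod structure theory or an argument tailored to the concrete CAT(0) cube complex setting, and it is the reason the statement is posed here as a conjecture rather than a theorem.
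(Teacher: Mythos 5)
First, a point of order: the paper does not prove this statement at all --- it is posed there precisely as a conjecture, so there is no proof of record to compare against, and your attempt must stand on its own. On its own terms it is a reduction, not a proof, and both branches of the reduction terminate in open problems. The preliminary step is fine once stated correctly: you should take a finite intersection $I$ of highest abelian subgroups whose rank is minimal among \emph{all} finite intersections (not merely stabilized along one nested chain); minimality then forces $I\cap A$ to have finite index in $I$ for every highest abelian $A$, and failure of AIP leaves exactly your two cases: an infinite-order $z$ with a power in every highest abelian subgroup, or a nontrivial finite subgroup contained in every highest abelian subgroup. The genuine gap is the step you yourself flag as the ``heart of the matter,'' and it is not a technical point that parallel-set or Caprace--Monod machinery will dispatch. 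From ``every infinite-order $h$ admits $m,n$ with $[z^m,h^n]=1$'' you obtain only the ascending chain $C_G(z)\leq C_G(z^2)\leq C_G(z^6)\leq\cdots$; CAT(0) groups are not known to satisfy any chain condition making this stabilize, and even a stable centralizer would contain only the powers $h^n$, never a finite-index subgroup of $G$. The classical splitting theorem (Bridson--Haefliger II.6.12) runs in the opposite direction (a hyperbolic \emph{central} element yields a virtual $\ZZ$-factor), and the Caprace--Monod splitting theorems need a normal or commensurated subgroup plus minimality hypotheses you have not verified. What you actually need --- ``a commensurated infinite cyclic subgroup is virtually central'' --- is false for general groups: in $BS(1,2)=\langle a,t\mid tat^{-1}=a^2\rangle$ the subgroup $\langle a\rangle$ is commensurated, yet $C(a^m)$ has infinite index for every $m$. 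So the CAT(0) hypothesis must be used at exactly this point, no existing theorem does this, and your middle step is essentially equivalent to the conjecture itself.

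The torsion case ($r_\infty=0$) also has a real gap. Your recipe ``find axial elements whose flats miss the fixed set of $t$'' does not suffice as stated: a highest abelian subgroup $A$ containing a power $g^k$ of your axial element can contain $t$ whenever $t$ centralizes $g^k$, which only forces $\mathrm{Fix}(t)\cap\minset(g^k)\neq\emptyset$; you would have to preclude this for every $k$ simultaneously, and $\minset(g^k)$ can grow with $k$. Here is a cleaner observation you could substitute: conjugates of highest abelian subgroups are highest, so in this case the normal closure $N$ of the residual finite subgroup lies in every highest abelian subgroup and is therefore a normal \emph{abelian} subgroup of $G$. If $N$ is infinite, conjugation preserves the translation lengths of a finite-index $\ZZ^k\leq N$, hence the image of $G\to\operatorname{Aut}(\ZZ^k)$ preserves a positive-definite form and is finite, so $C_G(N)$ is a finite-index subgroup with infinite center --- the contradiction you want. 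But if $N$ is finite you are stuck: $N$ is central in the finite-index subgroup $C_G(N)$, whose highest abelian subgroups are again highest in $G$ and all contain $N$, and finite centers are allowed by your hypotheses; one would want to pass to $C_G(N)/N$ and it is not clear that this quotient is again a CAT(0) group. So both halves of your outline reduce the conjecture to statements that are themselves open, which is consistent with --- and explains --- the statement being a conjecture rather than a theorem in the paper.
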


While it is clear that the center of $G$ must be finite, we also require the condition on finite index subgroups as there are Bieberbach groups which have trivial center. Such groups are virtually $\ZZ^n$ and so certainly do not have AIP.

\begin{lem}\label{lemma:quasilines}
	Let $G = G_1\times\dots\times G_n$ be a group acting properly, cocompactly and essentially on a CAT(0) cube complex $X$. Suppose that $G$ has AIP. Let $A$ be a highest Abelian subgroup of $G$. Then $A = A_1\times\dots\times A_n$, with $A_i\subset G_i$. Moreover, each $A_i$ contains a finite index subgroup which acts on a product of quasi-lines in $X$. 
\end{lem}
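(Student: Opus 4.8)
The plan is to prove the statement in two stages: first a purely group-theoretic splitting of $A$ obtained from the product structure of $G$ together with the highest hypothesis, and then a geometric realisation of each factor via the flat torus theorem combined with the combinatorial structure of $X$. Throughout, the standing AIP assumption is used only indirectly (via the finite-center lemma, to guarantee the factors are non-degenerate); the core of this particular lemma rests on the highest property and on properness.

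For the algebraic splitting, let $\pi_i\colon G\to G_i$ be the coordinate projections and set $A_i=\pi_i(A)\subset G_i$. Each $A_i$ is abelian, being a quotient of $A$, and $A$ embeds into the abelian group $A_1\times\dots\times A_n$. I would then run a rank count. Since $G$ acts properly and cocompactly on the CAT(0) complex $X$, every abelian subgroup is finitely generated and virtually $\ZZ^{k}$, so it has a well-defined rank. If $A_1\times\dots\times A_n$ had strictly larger rank than $A$, then $A$ — being a finite index subgroup of itself — would lie in an abelian subgroup of strictly higher rank, contradicting that $A$ is highest. Hence $\operatorname{rank}(A)=\operatorname{rank}(A_1\times\dots\times A_n)$ and $A$ is finite index in $A_1\times\dots\times A_n$; that is, $A$ is commensurable with the product, which is the asserted decomposition with $A_i\subset G_i$ (one may replace $A$ by the genuine product $\prod_i(A\cap G_i)$ where a literal splitting is convenient, since all later uses are up to finite index).

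To realise each factor, fix $A_i$ and pass to a finite index subgroup $A_i'\cong\ZZ^{k_i}$. As $A_i'\le G$ acts properly on $X$, properness forces each infinite-order element to be a hyperbolic isometry with an axis, so $A_i'$ is generated by commuting hyperbolic isometries $t_1,\dots,t_{k_i}$. I would then apply the flat torus theorem \cite{bridsonHaefliger}: after a further finite index reduction, $A_i'$ stabilises a flat $F_i\cong\RR^{k_i}$ on which it acts cocompactly, and $\minset(A_i')$ splits isometrically as $Y_i\times\RR^{k_i}$.

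The remaining, and principal, step is to upgrade this abstract CAT(0) flat to a genuine combinatorial product of $k_i$ quasi-lines inside $X$. Using the hyperplane and halfspace combinatorics of the cube complex, together with the essentiality of the action (via the skewering machinery of \cite{caprace_cubical}), I would associate to each translation direction $t_j$ a family of hyperplanes and a quasi-line $\ell_{i,j}$, and argue that the $k_i$ families are pairwise transverse, so that the combinatorial convex hull of an $A_i'$-orbit is isomorphic to the product $\ell_{i,1}\times\dots\times\ell_{i,k_i}$, on which $A_i'$ acts. I expect this passage from the metric flat to the concrete cubical product of quasi-lines to be the main obstacle, since it requires controlling how the commuting translations interact with the wall structure of $X$ rather than merely with the CAT(0) metric.
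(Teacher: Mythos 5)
Your algebraic step runs parallel to the paper's: the paper also sets $A_i = p_i(A)$ and uses highestness to obtain the splitting, though it argues the sharper containment $p_i(A)<A$ (so that $A$ \emph{equals} $A_1\times\dots\times A_n$ as the statement asserts, and moreover each $A_i$ is highest in $G_i$), whereas your rank count yields only commensurability; this discrepancy is minor. The first real problem is in your geometric half: you defer the central construction --- passing from the metric flat supplied by the flat torus theorem to an honest cubical product of quasi-lines --- and label it ``the main obstacle.'' That step is not a verification one can leave as a plan; it is precisely the cubical flat torus theorem of \cite{wise_cubical}, which the paper invokes as a black box together with two of its refinements: \cite[Lemma 4]{wise_cubical}, giving a finite index subgroup of $A$ with preferred generators $S=\{a_1,\dots,a_m\}$, each translating along a single quasi-line factor, and \cite[Theorem 7]{wise_cubical}, controlling intersections of highest abelian subgroups. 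A proof whose main step is an unproved substantial theorem is not yet a proof.

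The second problem is more fundamental: even if you carried out your plan, you would have proved the wrong statement. You run the machinery separately for each $A_i'$, producing for each factor its own product of quasi-lines somewhere in $X$, and you explicitly claim AIP is used ``only indirectly.'' But the content of the lemma --- what the paper's proof actually establishes, and what the product decomposition theorem later relies on --- is that in the \emph{single} product $C_1\times\dots\times C_m$ associated to all of $A$, each $A_i$ has a finite index subgroup acting on a subproduct of the $C_l$ and trivially on the complementary quasi-lines; that is, the quasi-line factors align with the group-theoretic factors (so that each quasi-line is translated along by elements of only one $\bar{A}_i$). This alignment is exactly where AIP enters: the paper intersects $A$ with the highest abelian subgroups $A_1\times\prod_{i\geq 2}A_i^{j_i}$ built from AIP witnesses in the other factors, applies \cite[Theorem 7]{wise_cubical} to see that each such intersection is commensurable with a subgroup generated by a subset of $S$, and intersects finitely many of these to cut $A_1$ down to a finite index subgroup generated by powers of elements of $S$. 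Without AIP the alignment simply fails: let $G_1 = G_2 = \ZZ$ act on the standard cubulation of $\RR^2$ with the generator of $G_1$ translating by $(1,1)$ and the generator of $G_2$ by $(1,-1)$. This action is proper, cocompact and essential, yet every nontrivial element of either factor translates along both coordinate quasi-lines, so no finite index subgroup of $G_1$ acts on a proper subproduct. (Of course $\ZZ$ fails AIP --- which is the point: the hypothesis you set aside is the one doing the work, and your weaker reading of the conclusion, that each $A_i'$ acts on \emph{some} product of quasi-lines, is nearly vacuous and would not support the paper's later arguments.)
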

\begin{proof}
	Let $B_i = p_i(A)$ where $p_i$ is the projection of $G$ to $G_i$. Since $B_i$ is Abelian and the projection map splits we see that $B_i<A$. Suppose that $B_i$ is not highest in $G_i$, then there is a subgroup $C_i<G_i$ such that $B_i\cap C_i$ has finite index in $B_i$. We can now take $H_i = p_i^{-1}(B_i\cap C_i)\cap A$. This has finite index in $A$ and every element of $C_i$ commutes with every element $H_i$. Thus we have that $A$ is a product of highest abelian subgroups. 
	
	By the cubical flat torus theorem \cite{wise_cubical} we see that $A$ acts on a product of quasi-lines $C_1\times\dots\times C_m$. We will show that $A_1$ has a finite index subgroup which acts on a subproduct $C_1\times\dots\times C_k$ and acts trivially on $C_{k+1}\times\dots\times C_m$. 
	
	By \cite[Lemma 4]{wise_cubical}, there is a finite index subgroup of $A$ with a preferred set of generators $S = \{a_1, \dots a_m\}$ such that $a_i$ acts non-trivially only on $C_i$ upon which it acts by translation. 
	
	Let $A_i^j$ be a sequence of subgroups witnessing AIP for the group $G_i$. We will assume that $A_i^0 = A_i$. 
	
	By \cite[Theorem 7]{wise_cubical}, the intersection $$A\bigcap \left(A_1\times \prod_{i=2}^{n}A_i^{j_i}\right)$$ is commensurable with a subgroup generated by a subset of $S$. In particular each has a finite index subgroup generated by powers of the $\alpha_i$. Let $B$ be one such intersection and $B'$ be another. Let $C$ and $C'$ be the finite index subgroups of $B$ and $B'$ respectively. Then $C\cap C'$ is finite index in $B\cap B'$. After taking a finite number of intersections we will arrive at a finite index subgroup of $A_1$ generated by powers of elements of $S$. This subgroup will act on the subproduct of quasi-lines as stated. The proof is similar for the other $A_i$. 
\end{proof}

It is useful to be able to pass back from products of groups to the groups themselves. It is useful to know that AIP is preserved under this transition. 

\begin{prop}
	Let $G = G_1\times\dots\times G_n$. Then $G$ has AIP if and only if $G_i$ has AIP for each $i$. 
\end{prop}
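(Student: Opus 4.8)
The plan is to prove both implications, relying on a single structural observation about highest Abelian subgroups of a direct product which I would isolate first: \emph{a product $A_1\times\dots\times A_n$ with $A_i\leq G_i$ is highest in $G$ if and only if each $A_i$ is highest in $G_i$.} The only nontrivial direction is that a product of highest subgroups is highest, and I would prove it by a rank-and-projection argument. Suppose a finite index subgroup $A'\leq A_1\times\dots\times A_n$ were contained in an Abelian $B\leq G$ with $\operatorname{rank}(B)>\sum_i\operatorname{rank}(A_i)$. Since $B$ injects into $\prod_l p_l(B)$ we get $\sum_l\operatorname{rank}(p_l(B))\geq\operatorname{rank}(B)>\sum_i\operatorname{rank}(A_i)$, so $\operatorname{rank}(p_i(B))>\operatorname{rank}(A_i)$ for some $i$. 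But $p_i(A')$ is finite index in $p_i(A_1\times\dots\times A_n)=A_i$ and lies in the Abelian group $p_i(B)$, contradicting that $A_i$ is highest.

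For the reverse implication, suppose each $G_i$ has AIP, witnessed by highest Abelian subgroups $A_i^1,\dots,A_i^{k_i}\leq G_i$ with $\bigcap_j A_i^j=\{e\}$. For each $i$ and each $j\in\{1,\dots,k_i\}$ I would set
\[
  H_i^j = A_1^1\times\dots\times A_{i-1}^1\times A_i^j\times A_{i+1}^1\times\dots\times A_n^1 .
\]
By the structural observation each $H_i^j$ is a highest Abelian subgroup of $G$. Since the intersection of a family of product subgroups is the product of the coordinatewise intersections, in the $l$-th coordinate the factors appearing among all the $H_i^j$ are $A_l^1$ (from every $H_i^j$ with $i\neq l$) together with $A_l^1,\dots,A_l^{k_l}$ (from the $H_l^j$), so the $l$-th coordinate of $\bigcap_{i,j}H_i^j$ equals $\bigcap_j A_l^j=\{e\}$. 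Hence $\bigcap_{i,j}H_i^j=\{e\}$ and $G$ has AIP.

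The forward implication is harder and begins by upgrading the structural observation: if $A\leq G$ is highest Abelian then $A$ has finite index in $\prod_l p_l(A)$ and each $p_l(A)$ is highest in $G_l$. Indeed $A\leq\prod_l p_l(A)$, and since $A$ is highest the containing Abelian group $\prod_l p_l(A)$ cannot have strictly larger rank; using that $A$ is finitely generated, equal rank and containment force finite index. Highness of each $p_i(A)$ then follows by replacing the $i$-th factor of $\prod_l p_l(A)$ with a hypothetical higher-rank Abelian and intersecting with $A$, which remains finite index by the product structure. Now let $A^1,\dots,A^m$ witness AIP for $G$, write $A_l^t=p_l(A^t)$, fix a coordinate $i$, and set $Z_l=\bigcap_t A_l^t$. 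Because $\prod_l Z_l\leq\prod_l A_l^t$ and $A^t$ is finite index in $\prod_l A_l^t$, each $A^t$ meets $\prod_l Z_l$ in a finite index subgroup of $\prod_l Z_l$, so $\bigcap_t A^t=\{e\}$ forces $\prod_l Z_l$, and in particular $Z_i=\bigcap_t A_i^t$, to be finite. Thus $\{A_i^t\}_t$ is a finite family of highest Abelian subgroups of $G_i$ with finite intersection.

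The main obstacle is the passage from a finite intersection to the trivial intersection demanded by the definition of AIP. The same computation shows $Z_i$ is a torsion group, since an infinite order element of $Z_i$ would produce a nontrivial element of $\bigcap_t A^t$; hence when $G_i$ is torsion free the forward implication is complete. In general I would remove the remaining finitely many nontrivial torsion elements of $Z_i$ by adjoining, for each such element, a highest Abelian subgroup of $G_i$ avoiding it. Establishing the existence of these avoiding subgroups is the delicate step, and I expect it to be where the finite index product decomposition of the highest Abelian subgroups of $G$ must be exploited most carefully.
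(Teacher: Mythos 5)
Your reverse implication is complete and correct, and your analysis of the forward implication is actually more honest than a casual reading suggests is necessary: you correctly observe that a highest Abelian subgroup $A\leq G$ need only have \emph{finite index} in $\prod_l p_l(A)$ rather than equal it (e.g.\ the index-two subgroup $\langle (a,b),(a,b^{-1})\rangle$ of $\langle a\rangle\times\langle b\rangle$ is a highest Abelian subgroup of $F_2\times F_2$ that is not a product of subgroups of the factors), so projecting a trivially-intersecting family of highest Abelian subgroups of $G$ only yields, in each factor $G_i$, a family of highest Abelian subgroups whose intersection $Z_i$ is \emph{finite}, not trivial. The genuine gap is your final step: you propose to kill the nontrivial torsion elements of $Z_i$ by adjoining, for each one, a highest Abelian subgroup avoiding it, but you do not produce such subgroups and explicitly flag their existence as unresolved. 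As written, your forward implication is therefore a proof only when the $G_i$ are torsion-free.

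The missing step is, however, a one-line lemma rather than a delicate one: \emph{any finite-index subgroup $N$ of a highest Abelian subgroup $A$ is again highest}, because $\operatorname{rank}(N)=\operatorname{rank}(A)$, and a finite-index subgroup of $N$ lying in a higher-rank Abelian group would also be a finite-index subgroup of $A$, contradicting that $A$ is highest. Granting this, take one of your highest subgroups $A_i^1$, write it as $T\oplus F$ with $T$ its (finite) torsion subgroup and $F$ free Abelian --- this uses that highest Abelian subgroups are finitely generated, which holds in the CAT(0) setting the paper intends but is an implicit hypothesis in any proof of this statement --- and note that $F$ has finite index in $A_i^1$, hence is highest, and is torsion-free. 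Adjoining $F$ to your family makes the total intersection lie in $Z_i\cap F=\{e\}$, killing all the torsion at once rather than element by element. For comparison, the paper's own proof is far terser: it simply asserts that the projections of a trivially-intersecting family intersect trivially in each factor, an assertion that implicitly relies on the exact splitting $A=\prod_l p_l(A)$ claimed in Lemma \ref{lemma:quasilines}, which the example above shows can fail; so your finite-index bookkeeping, completed by the lemma just stated, is what an airtight argument actually requires.
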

\begin{proof}
	If each of the $G_i$ has AIP, then taking the products of the highest abelian groups in each factor gives the desired family. 
	
	If $G$ has AIP, then we can find a sequence of highest abelian subgroups intersecting trivially. The projection to each factor gives a highest abelian subgroup of the factor. These must intersect trivially in each factor. 
\end{proof}

\section{A product decomposition theorem for cube complexes}

In this section we wish to prove a product decomposition for cubical groups with AIP. We begin by getting a splitting result for the cube complex. 

\begin{prop}
	Let $G_1, \dots, G_n$ be finitely generated groups with AIP. Suppose that $G = G_1\times\dots \times G_n$ acts properly, cocompactly and essentially on a CAT(0) cube complex $X$. Then $X$ decomposes as a product of CAT(0) cube complexes $X = X_1\times\dots\times X_n$. 
\end{prop}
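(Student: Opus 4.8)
The plan is to reduce the statement to a combinatorial fact about hyperplanes and then feed in the flat structure supplied by Lemma \ref{lemma:quasilines}. Recall that a finite-dimensional CAT(0) cube complex $X$ splits as a product $X \cong X_1 \times \cdots \times X_n$ precisely when its set of hyperplanes $\mathcal{H}(X)$ admits a partition $\mathcal{H}(X) = \mathcal{H}_1 \sqcup \cdots \sqcup \mathcal{H}_n$ in which every hyperplane of $\mathcal{H}_i$ crosses every hyperplane of $\mathcal{H}_j$ whenever $i \neq j$; this is the halfspace/pocset decomposition criterion of Caprace--Sageev \cite{caprace_cubical}. Cocompactness guarantees that $X$ is finite-dimensional, so the criterion applies, and it suffices to produce such a partition out of the product structure of $G$ and to check that it is $G$-invariant.

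For each $i$ I would define $\mathcal{H}_i$ to be the set of hyperplanes of $X$ skewered by some infinite-order element of $G_i$; that is, $\hat{h} \in \mathcal{H}_i$ if some $g \in G_i$ admits a combinatorial axis crossing $\hat{h}$ with $g^{k}\hat{h}^{+} \subsetneq \hat{h}^{+}$. Since each direct factor $G_i$ is normal in $G$ and skewering is conjugation-equivariant, each $\mathcal{H}_i$ is automatically $G$-invariant. The three things to verify are that the $\mathcal{H}_i$ are pairwise disjoint, that their union is all of $\mathcal{H}(X)$, and that hyperplanes in distinct classes always cross.

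All three reduce to a single device. Given infinite-order $g \in G_i$ and $h \in G_j$ with $i \neq j$, the elements commute, so $\langle g, h\rangle$ is Abelian and embeds in a highest Abelian subgroup $A$ of $G$. By Lemma \ref{lemma:quasilines} I may pass to finite index so that $A = A_1 \times \cdots \times A_n$ acts on a product of quasilines $C_1 \times \cdots \times C_m$, where each $C_\ell$ is assigned to a unique factor, each preferred generator translates along a single $C_\ell$, and the $G_i$-part $A_i$ acts trivially on all quasilines outside its own block. A hyperplane skewered by translation along $C_\ell$ is dual to $C_\ell$ and crosses every hyperplane dual to a different $C_{\ell'}$. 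This yields the crossing statement (the hyperplane skewered by $g$ is dual to a quasiline in the $A_i$-block, that skewered by $h$ to one in the $A_j$-block, and these cross), and simultaneously yields disjointness, since the trivial action of $A_i$ off its block prevents $g \in G_i$ from skewering any hyperplane of the $A_j$-block, while no single hyperplane is dual to two distinct quasiline factors. For exhaustiveness I would invoke the essentiality and cocompactness of the action through the double-skewering lemma of \cite{caprace_cubical}: every hyperplane $\hat{h}$ is skewered by some $g \in G$; embedding $\langle g\rangle$ in a highest Abelian subgroup $A$ and applying Lemma \ref{lemma:quasilines}, the quasiline dual to $\hat{h}$ is assigned to a unique factor $G_{i_0}$, so a power of the corresponding generator, an element of $G_{i_0}$, skewers $\hat{h}$ and places it in $\mathcal{H}_{i_0}$.

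I expect the main obstacle to be the crossing step, and in particular making rigorous the passage between skewering of a hyperplane in the ambient complex $X$ and being dual to a specific quasiline factor of the product that $A$ acts on; this is exactly where Lemma \ref{lemma:quasilines}, together with the analysis of \cite{wise_cubical} on which it rests, does the work. A secondary point requiring care is the role of AIP, which enters through Lemma \ref{lemma:quasilines} to guarantee that the flat for $A$ genuinely splits along the factors $G_i$, with each $C_\ell$ seen by only one factor; this well-definedness of the factor-assignment is precisely what makes the classes $\mathcal{H}_i$ disjoint and the partition canonical.
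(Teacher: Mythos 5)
Your proposal follows essentially the same route as the paper: the same partition of $\mathcal{H}(X)$ into the classes $\mathcal{H}_i$ of hyperplanes skewered by elements of $G_i$, exhaustiveness via Caprace--Sageev skewering combined with Lemma \ref{lemma:quasilines}, disjointness via a common highest abelian subgroup containing powers of the two competing skewering elements, and the Caprace--Sageev criterion to assemble the product. The only (minor) divergence is the crossing step, where you read off crossing from the product-of-quasilines structure itself, whereas the paper invokes the CAT(0) flat torus theorem and an orbit/convexity argument inside the resulting flat; both versions rest on the same device of placing powers of $g_i$ and $g_j$ in a single highest abelian subgroup.
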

\begin{proof}
	We will show that the set of hyperplanes $\cH(X)$ splits as a disjoint union $\cH(X) = \cH_1\sqcup\dots\sqcup \cH_n$ and that each hyperplane in $\cH_i$ intersects each hyperplane in $\cH_j$. 
	
	By \cite[Proposition 3.12]{caprace_cubical}, each hyperplane $\hyp$ is skewered by some element of $G$. Suppose that this element is $(g_1, \dots, g_n)$. Let $A_i<G_i$ be a highest abelian subgroup containing a power of $g_i$. Let $A = A_1\times\dots\times A_n$. Then $A$ is a highest abelian subgroup of $G$. Thus we get an action on a product of quasi-lines in which there is a finite index subgroup $\bar{A}_i$ of each $A_i$ acting on a subproduct by Lemma \ref{lemma:quasilines}. 
	
	By the flat torus theorem \cite[p.244 Theorem 7.1]{bridsonHaefliger}, the min set of $A$ decomposes as $E\times Z$ where $E = \RR^n$ and $A$ acts trivially on $Z$. The product of quasi-lines $Y$ constructed in \cite{wise_cubical} is the dual cube complex to the set of hyperplanes intersecting $E$. Since $\hyp$ is skewered by $(g_1, \dots, g_n)$ we see that $\hyp$ intersects $E$. So $\hyp$ is a hyperplane of $Y$. 
	
	The hyperplane $\hyp$ is dual to some quasi-line $C_l$ of $Y$. Thus it is skewered by the elements that translate along $C_l$. By Lemma \ref{lemma:quasilines} this can only consist of elements in $\bar{A}_i$ for one $i$. Thus the hyperplane is skewered by $g_i$ for some $i$ and every $g_j$ for $j\neq i$ does not skewer $\hyp$. 
	
	Now suppose that $\hyp$ is skewered by another element $(g_1', \dots, g_n')$ we will show that it is also skewered by $g_i'$. We know that some component $g_j'$ of $(g_1', \dots, g_n')$ skewers $\hyp$. 
	
	For the sake of a contradiction, we assume that $j\neq i$. We can construct a highest abelian subgroup $B$ containing powers of $g_i$ and $g_j'$. By our previous argument we see that $B$ acts on a product of quasi lines. We also see that there are $k_i$ and $k_j$ such that $g_i^{k_i}$ and $g_j^{k_j}$ only act on quasi-lines corresponding to the factors $i, j$ respectively. Thus we see that $j = i$. 
	
	We can now divide $\cH(X)$ into $n$ sets $\cH_1\sqcup\dots\sqcup \cH_n$ where $\cH_i$ is the set of hyperplanes skewered by elements in $G_i$. 
	
	We now show that any hyperplane $\hyp_i\in \cH_i$ intersects any hyperplane $\hyp_j\in \cH_j$ for $j\neq i$. 
	
	Let $\hyp_i\in\cH_i$ and $\hyp_j\in\cH_j$ be hyperplanes. Let $g_i\in G_i$ be an element which skewers $\hyp_i$ and $g_j\in G_j$ be an element which skewers $\hyp_j$. By replacing $g_i$ and $g_j$ with appropriate powers we can find a highest abelian subgroup $A<G$ containing $g_i$ and $g_j$. Let $p$ be the rank of $A$. By the flat torus theorem \cite[p.244 Theorem 7.1]{bridsonHaefliger} there is a copy of $\RR^p$ contained in $X$ upon which $A$ acts. The element $g_j$ acts by translation and does not skewer the hyperplane $\hyp_i$. Let $x\in \hyp_i$ we can see that $\langle g_j\rangle\cdot x$ is contained in $\hyp_i$. However since $g_j$ does skewer $\hyp_j$ we can see that $\langle g_j\rangle\cdot x$ contains points on both sides of $\hyp_j$ and thus $\hyp_i$ must intersect $\hyp_j$ by convexity. 
	
	By \cite{caprace_cubical}, we see that $X$ splits as a product $X = X_1\times\dots\times X_n$ where $X_i$ is the cube complex dual to $\cH_i$. 
\end{proof}

Since each $\hyp\in\cH_i$ is only skewered by elements in $G_i$ we see that the action decomposes as a product and that each $g_i$ acts elliptically on $X_1\times\dots\times X_{i-1}\times X_{i+1}\times\dots\times X_n$. 

\begin{theorem}
	Let $G$ and $X$ be as above. Then $G_i$ acts properly and cocompactly on $X_i$. 
\end{theorem}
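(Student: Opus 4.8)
The plan is to promote the product splitting $X=X_1\times\dots\times X_n$ to a \emph{genuine} product action by proving that, for $j\neq i$, the factor $G_j$ acts trivially on $X_i$; granting this, properness and cocompactness descend to each factor by soft arguments. Write $\rho_i\colon G\to\mathrm{Aut}(X_i)$ for the action on the $i$-th factor, which is defined because (as already observed) the $G$-action is a product action, each $\cH_i$ being $G$-invariant since $G_i\trianglelefteq G$ and a skewering witness of $s\hyp$ is $sg_is^{-1}\in G_i$. The key claim is that $\rho_i(G_j)=1$ whenever $j\neq i$. Once the claim and its symmetric versions are known, the action of $G=G_1\times\dots\times G_n$ on $X=X_1\times\dots\times X_n$ is the product of the actions $G_i\curvearrowright X_i$, so $X/G=\prod_i(X_i/G_i)$ is compact, forcing each $X_i/G_i$ compact; and any infinite subset of $G_i$ with $hK_i\cap K_i\neq\varnothing$ for a compact $K_i\subset X_i$ would, upon fixing the remaining coordinates, violate properness of $G\curvearrowright X$. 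Hence each $G_i\curvearrowright X_i$ is proper and cocompact.

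I expect the claim to be the main obstacle, and would prove it as follows. Fix $j\neq i$ and set $\phi=\rho_i(g_j)$ for $g_j\in G_j$. Since $[G_i,G_j]=1$, the automorphism $\phi$ commutes with every element of $\rho_i(G_i)$; and since, by the remark preceding the theorem, $g_j$ acts elliptically on $\prod_{k\neq j}X_k$ and hence on the factor $X_i$, the automorphism $\phi$ is elliptic on $X_i$. Now take any hyperplane $\hyp\in\cH_i$ and choose $g\in G_i$ skewering $\hyp$ (such $g$ exists by the construction of $\cH_i$); then $g$ is hyperbolic with an axis $\ell$, and the hyperplanes $\{g^k\hyp\}_{k\in\ZZ}$ all cross $\ell$, spaced by the translation length of $g$. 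Because $\phi g=g\phi$, we have $\phi(g^k\hyp)=g^k(\phi\hyp)$, so $\phi$ shifts this family rigidly along $\ell$; were $\phi\hyp\neq\hyp$, the orbit $\{\phi^m\hyp\}$ would march off to infinity along $\ell$, contradicting ellipticity of $\phi$. Hence $\phi\hyp=\hyp$ for every $\hyp\in\cH_i$.

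It remains to see that $\phi$ preserves the two halfspaces of each $\hyp$. If $\phi$ interchanged the halfspaces $\hyp^+$ and $\hyp^-$, then, writing $g\hyp^+\subsetneq\hyp^+$, the identity $\phi g\phi^{-1}=g$ would give $g\hyp^+=\phi g\phi^{-1}\hyp^+=\phi g\hyp^-\supsetneq\hyp^+$, contradicting $g\hyp^+\subsetneq\hyp^+$. Thus $\phi$ fixes every hyperplane together with its transverse orientation, so it fixes every vertex of the Sageev dual and $\phi=\id$; therefore $\rho_i(G_j)=1$. Running the same argument with the roles of $i$ and $j$ exchanged shows that $G_i$ acts trivially on each $X_j$ with $j\neq i$, which is exactly the input needed for the descent in the first paragraph. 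The one point demanding genuine care is the passage from ellipticity to hyperplane-fixing, where one must invoke the combinatorial axis structure of the skewering element $g$ (as in \cite{wise_cubical}) to make the ``marching off to infinity'' precise; everything else is formal.
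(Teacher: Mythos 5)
Your key claim --- that $\rho_i(G_j)=1$ for $j\neq i$ --- is false, and the paper itself exhibits the counterexample immediately after this theorem: for $F_2\times F_2$ acting on $T_{\square}\times T$, where $T_{\square}$ is the tree with each edge replaced by the diagonal of a square, the action is proper, cocompact and essential, the factors are (non-elementary hyperbolic, hence AIP) groups, yet the second factor acts on $T_{\square}$ nontrivially, each generator reflecting every square in its diagonal. This is why the paper only proves, in a separate proposition \emph{after} the theorem, that a finite index subgroup $G_i'<G_i$ acts trivially on $X_j$, and why its proof of the present theorem never uses triviality. The precise point where your argument breaks is the ``marching off to infinity'' step: from $\phi g=g\phi$ you correctly get $\phi(g^k\hyp)=g^k(\phi\hyp)$, i.e.\ $\phi$ carries the parallel family $\{g^k\hyp\}$ to the parallel family $\{g^k\phi\hyp\}$, but nothing forces $\phi\hyp$ to be a $g$-translate of $\hyp$, so iterating $\phi$ does not translate $\hyp$ along the axis $\ell$. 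In the counterexample, $\phi$ is an involution fixing the axis of $g$ pointwise while swapping the two hyperplanes dual to each square: $\phi\hyp\neq\hyp$, yet the $\phi$-orbit of $\hyp$ has two elements, and ellipticity of $\phi$ is in no tension with this.

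What the paper actually proves is the weaker, but sufficient, claim that $\prod_{k\neq i}G_k$ has \emph{bounded orbits} on $X_i$. It passes to a finite index subgroup preserving the irreducible factors of $X_i$, uses strongly separated hyperplanes in each irreducible factor together with double skewering to produce $g=(g_1,\dots,g_n)$ whose coordinate $g_i$ skewers a strongly separated pair, so that $\minset(g_i)=Y\times\RR$ with $Y$ bounded; any $g_l'$ with $l\neq i$ commutes with $g_i$, hence preserves this set, cannot skewer the relevant hyperplane, so fixes a point of the $\RR$-factor, and commutation with the translation $g_i$ rules out a reflection --- giving $g_l'$ a bounded orbit. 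Cocompactness of $G_i$ on $X_i$ then follows because $G$ acts cocompactly on $X_i$ while the complementary factor moves points a bounded amount, and properness follows by thickening a putative bad bounded set $A\subset X_i$ to $A\times\prod_{k\neq i}C_k$ (with $C_k$ a bounded set containing a $G_i$-orbit in $X_k$) and contradicting properness of $G$ on $X$. If you want to salvage your outline, replace ``acts trivially'' by ``has bounded orbits'' throughout; but note that your soft descent of cocompactness via $X/G=\prod_i(X_i/G_i)$ must then also be redone along the paper's lines, since the action is genuinely not a product action.
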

\begin{proof}
	We first show that $G_i$ acts cocompactly on $X_i$. There is a splitting of $X_i$ as a product $X_{i1}\times\dots\times X_{im}$ and a finite index subgroup $\bar{G}<G$ which does not permute the factors of this splitting, we can assume that $\bar{G} = \prod_{k=1}^n \bar{G}_k$ where $\bar{G}_k$ is finite index in $G_k$. 
	
	Fix $j$ and consider $X_{ij}$. By construction there is an action of $\bar{G}$ on $X_{ij}$. Since $X_{ij}$ is irreducible, \cite[Proposition 5.1]{caprace_cubical} shows it contains a pair $\hyp, \hyp'$ of strongly separated hyperplanes. By the double skewering lemma there is some $g = (g_1, \dots, g_n)\in \bar{G}$ such that $g\hyp\subsetneq\hyp'\subsetneq\hyp$. 
	
	Since $\hyp$ is a hyperplane of $X_{ij}$ we see that $\hyp, \hyp'$ are skewered by $g_i$. So by \cite{caprace_cubical} every axis of $g_i$ intersects both $\hyp$ and $\hyp'$ and are all in a bounded neighbourhood of each other. 
	
	Let $g_l'\in \bar{G_l}$ for some $l\neq i$. Then $g_i$ and $g_l'$ commute. Thus, $g_l'$ preserves the axes of $g_i$ and we get an action of $g_l'$ on $\minset(g_i) = Y\times \RR$ where $Y$ is bounded. Since $g_l'$ does not skewer $\hyp$ we see that it must fix a point of the $\RR$ factor and thus acts by reflection or the identity. However since $g_i$ acts by translation and $g_i$ and $g_l'$ commute we see that $g_l'$ must act by the identity. Thus $g_l'$ acts on $Y$ and has a bounded orbit in $X_i$. 
	
	So the subgroup $\prod_{k\neq i}\bar{G}_k$ has a bounded orbit in $X_i$. Thus we see that $\hat{G}_i = \prod_{k\neq i}G_k$ also has a bounded orbit in $X_i$. However $G_i\times \hat{G}_i$ acts cocompactly and so $G_i$ must have a cocompact action. 
	
	We now prove that the action is proper. Suppose for a contradiction that the action is not proper. Then there is a bounded set $A\subset X_i$ such that $\{g\in G_i\mid g_iA\bigcap A\neq \emptyset\}$ is infinite. Since $G_i$ acts on $X_j$ with bounded orbits for $j\neq i$, we can take a bounded set $C_j$ containing an orbit. Then $\{g\in G_i \mid g(A\times \prod_{k\neq i}C_k)\neq\emptyset\}$ is infinite contradicting the properness of the action of $G$ on $X$. 
\end{proof}

Thus we have now proved the following:

\begin{thmk}\label{thm:productdecomp}
	Let $G_1, \dots, G_n$ be finitely generated groups. Suppose that $G = G_1\times\dots\times G_n$ acts on a CAT(0) cube complex $X$ properly, cocompactly and essentially. Then $X = X_1\times \dots\times X_n$ and $G_i$ acts properly cocompactly on $X_i$.  
\end{thmk}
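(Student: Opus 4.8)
The plan is to read the statement off directly from the two results established immediately above, since between them they supply exactly its two conclusions. First I would apply the Proposition of this section to obtain the cube-complex splitting $X = X_1\times\dots\times X_n$, where $X_i$ is the complex dual to the hyperplane family $\cH_i$ consisting of those hyperplanes skewered by elements of $G_i$. The work there is the clean partition $\cH(X) = \cH_1\sqcup\dots\sqcup\cH_n$ together with the fact that hyperplanes in distinct classes always cross, which is precisely what lets \cite{caprace_cubical} recognise the product structure. Second, I would apply the Theorem factor by factor to promote this to a dynamical statement: each $G_i$ acts properly and cocompactly on $X_i$. Concatenating the two gives the assertion verbatim, so no further geometric input is required; the whole proof is one sentence invoking the Proposition and then the Theorem.

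The only genuine step is bookkeeping about the hypotheses, and this is where I expect the real subtlety to sit. Both the Proposition and the Theorem were proved under the assumption that every factor $G_i$ has AIP (used, via Lemma \ref{lemma:quasilines}, to separate the highest Abelian subgroups across the factors and to pin down which quasi-lines each $A_i$ translates along). So the argument above literally establishes the AIP version, which is the form stated as the first theorem of the introduction; I would therefore either carry the AIP hypothesis through to this statement or, if the intention is to drop it, flag that the reduction is where all the difficulty lies. Abelian factors such as $G_i = \ZZ$ fail AIP yet split perfectly well, so an AIP-free proof cannot simply cite Lemma \ref{lemma:quasilines} as a black box.

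If I did want the unconditional version, the route I would attempt is to isolate the virtually Abelian factors, whose minsets are flats on which the flat torus theorem \cite[p.244 Theorem 7.1]{bridsonHaefliger} makes the product structure transparent, and to treat the remaining factors with the AIP machinery after passing to a finite-index subgroup that does not permute the quasi-line coordinates. The hard part is re-proving the hyperplane partition when highest Abelian subgroups of the factors need not intersect trivially, since that trivial-intersection property is exactly what forces each hyperplane into a single class in the Proposition's argument. Absent a substitute for that input, the honest course is to keep AIP as a hypothesis, matching Theorem~A as stated in the introduction.
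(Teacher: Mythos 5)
Your proposal is exactly the paper's own proof: the theorem is stated immediately after the Proposition and Theorem of this section with the justification ``Thus we have now proved the following,'' i.e., it is literally their concatenation, just as you describe. Your flag about the hypotheses is also correct and worth keeping --- the paper's statement here omits the AIP assumption even though both results it relies on were proved under it (and Theorem~A in the introduction retains it), so this is an inconsistency in the paper's phrasing rather than a gap in your argument.
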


The induced action of $G_i$ on $X_j$ may not be trivial for $j\neq i$. We do have the following:

\begin{prop}
	For every $i$ there is a finite index subgroup $G_i'<G_i$ such that $G_i'$ acts trivially on $X_j$ for $j\neq i$. 
\end{prop}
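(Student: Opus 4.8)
The plan is to promote the \emph{bounded-orbit} information already obtained in the proof of Theorem \ref{thm:productdecomp} into \emph{finiteness} of the induced actions. Since each $G_k$ is normal in $G$, conjugation sends a hyperplane skewered by $G_k$ to another such hyperplane, so $G$ preserves each set $\cH_k$ and hence the decomposition $X = X_1\times\dots\times X_n$ without permuting factors; the action is therefore a genuine product and we obtain homomorphisms $\phi_j\colon G\to\operatorname{Aut}(X_j)$ with $\phi_j(G_j)$ acting properly and cocompactly. Fix $j\neq i$. By the argument in Theorem \ref{thm:productdecomp} the subgroup $G_i$ has a bounded orbit on $X_j$, so by the Bruhat--Tits fixed point theorem $\phi_j(G_i)$ fixes a point $p\in X_j$. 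Because $G_i$ and $G_j$ commute, for every $g\in G_j$ and $g_i\in G_i$ we have $\phi_j(g_i)\,\phi_j(g)p=\phi_j(g)\,\phi_j(g_i)p=\phi_j(g)p$, so $\phi_j(G_i)$ fixes the entire orbit $\phi_j(G_j)\cdot p$ pointwise. Cocompactness of $\phi_j(G_j)$ supplies a constant $R$ with $X_j = N_R\big(\phi_j(G_j)\cdot p\big)$, i.e. this fixed set is $R$-dense.

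The key step is to deduce from this that $\phi_j(G_i)$ is \emph{finite}, and I would do so by showing that the restriction map from $\phi_j(G_i)$ to the automorphisms of $B(p,3R)$ is injective. Indeed, suppose $h_1,h_2\in\phi_j(G_i)$ agree on the ball $B(p,3R)$. For any $g\in G_j$ both $h_1$ and $h_2$ commute with $\phi_j(g)$, hence they agree on $\phi_j(g)\cdot B(p,3R)$; as $g$ ranges over $G_j$ these translates cover $N_{3R}\big(\phi_j(G_j)\cdot p\big)=X_j$, so $h_1=h_2$. Since $X_j$ carries a proper cocompact action it is locally finite, so $B(p,3R)$ meets only finitely many cubes and the restriction lands in the finite automorphism group of the finite subcomplex they span; therefore $\phi_j(G_i)$ is finite.

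It then follows that $\ker\big(\phi_j\vert_{G_i}\big)$ has finite index in $G_i$ for each $j\neq i$, and setting $G_i' = \bigcap_{j\neq i}\ker\big(\phi_j\vert_{G_i}\big)$ produces a finite-index subgroup of $G_i$ acting trivially on every $X_j$ with $j\neq i$. The main point to keep in mind is that one cannot hope to make the full group $G_i$ act trivially: a bounded-orbit automorphism commuting with $G_j$ may still permute cubes within the coarsely dense fixed set, and such finite-order phenomena are exactly what is absorbed by passing to $G_i'$. The only facts requiring care are that $\phi_j(G_i)$ has a genuine \emph{global} fixed point (so that $\phi_j(G_j)\cdot p$ is fixed pointwise rather than merely setwise) and that $X_j$ is locally finite; both are immediate from the proper cocompact actions provided by Theorem \ref{thm:productdecomp}.
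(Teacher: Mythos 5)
Your proof is correct, but it follows a genuinely different route from the paper's. The paper works downstairs: because $G_i$ commutes with $G_j$, its action on $X_j$ descends to the compact quotient $X_j/G_j$, whose automorphism group is finite, so a finite-index subgroup $G_{ij}<G_i$ acts trivially on the quotient; each element of $G_{ij}$ then agrees with a deck transformation, yielding a homomorphism $G_{ij}\to G_j$ whose image is central in $G_j$, hence finite (this invokes the paper's earlier lemma that CAT(0) groups with AIP have finite center), and the kernels are intersected exactly as in your last step. You instead work upstairs: bounded orbits plus Bruhat--Tits give a global fixed point $p$, commutation with the cocompact $G_j$-action propagates agreement on $B(p,3R)$ to all of $X_j$, and finiteness of the set of cubes near $p$ makes the restriction map injective into a finite group, so $\phi_j(G_i)$ is finite outright. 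Your route buys independence from AIP: it uses only the conclusions of Theorem \ref{thm:productdecomp} (proper cocompact actions on the factors and bounded orbits of $G_i$ on $X_j$), not finiteness of the center of $G_j$; it also sidesteps the covering-space lifting step, which in the paper needs some care when $G_j$ acts on $X_j$ with nontrivial point stabilizers (the element of $G_j$ realizing a given lift is then not unique, so the homomorphism $G_{ij}\to G_j$ requires justification). What the paper's route buys is brevity and a clean algebraic source of finiteness (central image). One small point to tighten in yours: local finiteness alone does not guarantee that $B(p,3R)$ meets only finitely many cubes; you also need finite dimensionality (equivalently, properness of the metric on $X_j$), but this too follows from the proper cocompact action, so the gap is cosmetic.
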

\begin{proof}
	There is an action of $G_i$ on $X_j/G_j$. Since $G_j$ acts cocompactly on $X_j$, this is an action on a compact space and so there is a finite index subgroup $G_{ij}<G_i$ which acts trivially on $X_j/G_j$. This gives a homomorphism $G_{ij}\to G_j$ and since every element of $G_{ij}$ commutes with every element of $G_j$, the image is contained in the center which is finite. And so taking the kernel gives us the desired finite index subgroup. We can now intersect these kernels for each $j$ to obtain $G_i'$ which acts trivially on $X_j$ for $j\neq i$. 
\end{proof}

Ideally we would like to say that the group $G_i$ acts trivially on the factors $X_j$ for $j\neq i$ although this is not possible as the following example illustrates. 

\begin{example}

Let $F_2$ be the free group on two generators. Let $T$ be the 4-regular tree and let $T_{\square}$ be the universal cover of the cube complex consisting of two squares and identifying two diagonal vertices on each square. We can see $T_{\square}$ as being obtained from the tree by replacing each edge with the diagonal of a square. There is a natural action of $F_2$ on $T_{\square}$ which is proper, cocompact and essential. 

Let $F_2\times F_2$ act on $T_{\square}\times T$ where the first $F_2$ acts trivially on $T$ and by covering transformations on $T_{\square}$. Let the second factor act on $T_4$ in the usual way and on $T_{\square}$ by letting each generator reflect in the diagonal of the square. This is a proper, cocompact and essential action although it is clear that the action of the second factor on $T_{\square}$ is not trivial. 

\end{example}

\section{A cubical dimension gap}

In \cite{bridsondimension}, a group $G$ is constructed where the CAT(0) dimension of finite index subgroups $H$ is strictly smaller than the CAT(0) dimension of $G$. We use an explicit group with CAT(0) dimension 2 but cubical dimension 3. We then use the product decomposition theorem discussed earlier to show that this example can be used to get arbitrary gaps between CAT(0) dimension and cubical dimension. 

We require the following proposition from \cite{bridsondimension}.

\begin{prop} \label{prop:commutatortree}
	Let $F_2$ act properly on an $\RR$-tree, then $|[a, b]|\neq |a|$ or $|[a, b]|\neq |b|$. 
\end{prop}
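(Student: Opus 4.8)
The plan is to prove the equivalent statement that the three translation lengths $|a|$, $|b|$, $|[a,b]|$ cannot all coincide. Write $\alpha=|a|$, $\beta=|b|$, $\gamma=|[a,b]|$ for the translation lengths on the $\RR$-tree $T$, and let $A_a,A_b$ denote the axes. Since $F_2$ is torsion-free and the action is proper, no nontrivial element can fix a point of $T$: a fixed point would give an infinite point stabiliser, contradicting properness. Hence $a$, $b$ and $[a,b]$ are all hyperbolic, with $\alpha,\beta,\gamma>0$. The desired conclusion ``$\gamma\neq\alpha$ or $\gamma\neq\beta$'' is automatic whenever $\alpha\neq\beta$, so it suffices to treat the case $\alpha=\beta=:\lambda$ and prove $\gamma>\lambda$.

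First I would use properness to bound how much the two axes overlap. Let $\Delta\ge 0$ be the length of the overlap $A_a\cap A_b$ (a point or segment; set $\Delta=0$ when the axes are disjoint). Orienting $A_a,A_b$ by the translation directions of $a$ and $b$, exactly one of $b,b^{-1}$ is anti-aligned with $a$ along the overlap; call it $b^{\pm}$. If $\Delta\ge\lambda$, then $a$ and $b^{\pm}$ are hyperbolic of \emph{equal} translation length $\lambda$ whose axes overlap incoherently in a segment of length at least $\lambda$, and a one-line endpoint computation shows that $ab^{\pm}$ fixes an endpoint of the overlap, so $ab^{\pm}$ is elliptic. Since $ab^{\pm}\neq e$ has infinite order, this contradicts properness. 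Therefore $\Delta<\lambda$.

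Next I would compute $\gamma$ by writing the commutator as a product of two hyperbolic isometries of equal length. Since $[a,b]=a\,(bab^{-1})^{-1}$ and $bab^{-1}$ is hyperbolic of length $\alpha=\lambda$ with axis $bA_a$, I would locate $bA_a$ relative to $A_a$: both run along $A_b$, on sub-segments offset along $A_b$ by $\beta=\lambda$, and since this offset exceeds the overlap $\Delta<\lambda$, the axes $A_a$ and $bA_a$ are disjoint, joined by a bridge of length $\beta-\Delta>0$ lying inside $A_b$. The standard translation-length formula for a product of two hyperbolic isometries with disjoint axes then gives
\[
\gamma=\bigl|a\,(bab^{-1})^{-1}\bigr|=\alpha+\alpha+2(\beta-\Delta)=2\lambda+2(\lambda-\Delta).
\]
As $\Delta<\lambda$, this yields $\gamma>2\lambda>\lambda$, contradicting $\gamma=\lambda$ and finishing the proof. (If $A_a,A_b$ are already disjoint the bridge only lengthens, and the same inequality holds a fortiori.)

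The main obstacle, and the reason the statement is a disjunction rather than the flat inequality ``$|[a,b]|\neq|a|$'', is precisely the overlap bound in the middle step: the elliptic-element trick produces a contradiction only when $a$ and $b$ have equal translation length, because the incoherent product of two hyperbolics sharing a long common axis is elliptic exactly in the equal-length case (for unequal lengths it remains hyperbolic of length $\bigl||a|-|b|\bigr|$). Pinning down the position of $bA_a$, hence the correct bridge length $\beta-\Delta$, and invoking the disjoint-axis formula cleanly is where the real care lies; the rest is routine bookkeeping with translation lengths on trees.
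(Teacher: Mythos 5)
Your proof is correct, but note that the paper never proves this proposition at all: it is imported verbatim from \cite{bridsondimension}, so there is no internal argument to compare against, and your write-up actually supplies a self-contained proof where the paper defers to a citation. Your route is the standard Culler--Morgan axis analysis, and every step checks out. Properness does rule out nontrivial elliptic elements (a fixed point would give an infinite point stabiliser), so $a$, $b$, $ab^{\pm 1}$ and $[a,b]$ are all hyperbolic; the fixed-point computation showing that two anti-aligned hyperbolics of equal length $\lambda$ with overlap at least $\lambda$ have elliptic product is exactly right (and also disposes of the cases where $A_a\cap A_b$ is a ray or the whole line, which your phrase ``a point or segment'' silently omits --- worth saying explicitly); and the identity $[a,b]=a\,(bab^{-1})^{-1}$ together with the disjoint-axes formula $|gh|=|g|+|h|+2d(A_g,A_h)$ gives $|[a,b]|=4\lambda-2\Delta>2\lambda$. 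Two points deserve a line more of justification than you give them: (i) the disjointness of $A_a$ and $bA_a$ needs a short tripod argument (if the two branches leaving $A_b$ at the parameters $\Delta$ and $\lambda$ met off $A_b$, the centre of the resulting tripod would lie in both $A_a\cap A_b=[0,\Delta]$ and $bA_a\cap A_b=[\lambda,\lambda+\Delta]$, forcing $\Delta=\lambda$); and (ii) the identification of the bridge as the segment $[\Delta,\lambda]\subset A_b$ of length $\lambda-\Delta$, which follows from the same picture. You also implicitly use that every isometry of an $\RR$-tree is semisimple (elliptic or hyperbolic, never parabolic); this is standard but should be cited alongside the product formulas, e.g.\ to Culler--Morgan. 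With those small additions your argument is complete and, unlike the paper, makes the dimension-gap construction self-contained at this point.
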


Let $G = ((F_2\times F_2)\ast_{\ZZ})\ast_{\ZZ}$ given by the presentation $$ \langle x, y, a, b, s, t\mid [x, a], [x, b], [y, a], [y, b], sas^{-1} = [a, b] = tbt^{-1}\rangle.$$

We will prove that this group has a dimension gap and satisfies property (AIP). Thus taking products we can realise arbitrarily large dimension gaps between CAT(0) and cubical dimension. 

\begin{prop}\label{dimension2}
	The group $G$ has CAT(0) dimension 2. 
\end{prop}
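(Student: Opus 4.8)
The plan is to establish a matching lower and upper bound. For the lower bound, observe first that $G$ is torsion-free (it is an iterated HNN extension of the torsion-free group $F_2\times F_2 = \langle x,y\rangle\times\langle a,b\rangle$ over $\ZZ$) and that it contains a copy of $\ZZ^2$, for instance $\langle x,a\rangle$, since $[x,a]=1$ and $x,a$ retain infinite order under the retraction $G\to F_2\times F_2$ onto the vertex group. A Mayer--Vietoris computation along the two HNN splittings gives $\mathrm{cd}(G)=2$, and in particular $\mathrm{cd}(G)\geq\mathrm{cd}(\ZZ^2)=2$. If $G$ acts properly and cocompactly on a CAT(0) space $Y$, then since $G$ is torsion-free the action is free, so $Y/G$ is a compact $K(G,1)$ and $\dim Y=\dim(Y/G)\geq \mathrm{cd}(G)=2$. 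Hence the CAT(0) dimension of $G$ is at least $2$. (Equivalently: a group acting geometrically on a $1$-dimensional CAT(0) space, i.e.\ an $\RR$-tree, is virtually free, and a virtually free group cannot contain $\ZZ^2$.)

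For the upper bound I would exhibit a compact, nonpositively curved, piecewise-Euclidean $2$-complex $K$ with $\pi_1(K)\cong G$; its universal cover $\widetilde K$ is then a $2$-dimensional CAT(0) space on which $G$ acts properly and cocompactly, so the CAT(0) dimension is at most $2$. Following the methods of \cite{bridsondimension}, I would assemble $K$ from two kinds of pieces. The core is the square complex $(S^1\vee S^1)\times(S^1\vee S^1)$ modelling $\langle x,y\rangle\times\langle a,b\rangle$, metrized by flat Euclidean rectangles and hence automatically nonpositively curved. To this core I would attach two annular tubes: one realizing $sas^{-1}=[a,b]$, glued along the closed local geodesic representing $a$ on one boundary and the cyclically reduced closed local geodesic representing $[a,b]$ on the other; and one realizing $tbt^{-1}=[a,b]$, glued along $b$ and $[a,b]$. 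The core directions of the tubes carry the stable letters $s$ and $t$. By van Kampen $\pi_1(K)\cong G$, and $K$ is $2$-dimensional by construction.

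The main obstacle is the metric realization of the tubes, and it is exactly here that the contrast with cubical geometry surfaces. Since $s$ conjugates $a$ to $[a,b]$ and $t$ conjugates $b$ to $[a,b]$, any isometric action forces a common translation length $\ell(a)=\ell(b)=\ell([a,b])$ in $\widetilde K$. In the core, the loops representing $a$, $b$ and $[a,b]$ have combinatorial lengths in ratio $1:1:4$, so no flat product annulus $S^1\times[0,1]$ (whose two boundary circles are necessarily isometric) can connect a short loop to the long commutator loop. This is the same obstruction that, by Proposition \ref{prop:commutatortree}, prevents any $\RR$-tree from carrying $|a|=|b|=|[a,b]|$, and it is what forces a product-of-trees or cube-complex model up to dimension $3$. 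The remedy, available only because we now have two genuine dimensions, is to model each tube as a piecewise-Euclidean annulus tiled by flat triangles, with interior cone angles at least $2\pi$ and locally convex boundary, interpolating between the short and the long boundary geodesic; the concentrated nonpositive curvature this provides cannot be reproduced one-dimensionally.

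The crux of the verification is therefore the Gromov link condition for $K$. I would check that every vertex link --- in the core, along each of the two gluing curves where a tube meets the square complex, and at the interior cone vertices of the tubes --- is a metric graph whose shortest locally geodesic loop has length at least $2\pi$, and that the gluing curves are locally convex. Granting this, $K$ is nonpositively curved, $\widetilde K$ is CAT(0) of dimension $2$, and combined with the lower bound the CAT(0) dimension of $G$ is exactly $2$. I expect the careful design of the triangulated tubes so that all of these link inequalities hold simultaneously to be the one genuinely delicate step; the rest is bookkeeping with \cite{bridsonHaefliger}.
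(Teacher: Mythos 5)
Your lower bound is fine (the paper leaves it implicit), but the upper bound fails at exactly the step you flag as delicate: the interpolating tubes you describe do not exist, and no variant of them can exist. Conceptually, translation length is a conjugacy invariant, so the relations $sas^{-1}=[a,b]=tbt^{-1}$ force $|a|=|b|=|[a,b]|$ in \emph{any} CAT(0) model of $G$; your rectangle-metrized core makes the loops for $a$ and for $[a,b]$ closed geodesics with lengths in ratio $1:4$, and this defect cannot be repaired by anything you attach, because it lives in the core metric, not in the tubes. Concretely, Gauss--Bonnet kills the proposed tubes: for a piecewise-Euclidean annulus $A$ one has $\sum_{v\ \mathrm{interior}}(2\pi-\theta_v)+\sum_{v\in\partial A}(\pi-\theta_v)=2\pi\chi(A)=0$, and your two requirements (interior cone angles $\geq 2\pi$, locally geodesic boundary, the latter forced anyway by the Gromov link condition at points of the gluing curves) make every summand nonpositive; hence every interior angle is exactly $2\pi$ and every boundary angle exactly $\pi$, i.e.\ $A$ is a flat cylinder and its two boundary circles are isometric. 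So a tube joining a loop of length $1$ to a loop of length $4$ is impossible, with or without ``concentrated nonpositive curvature''; equivalently, this is the flat strip theorem, which forbids a CAT(0) strip whose two boundary geodesics are translated by different amounts by the same isometry.

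The paper (following \cite{bridsondimension}) fixes this by skewing the core rather than curving the tubes: the four tori are built from parallelograms in which the $a$- and $b$-sides have length $4$ and vertical height $1$, while the $x,y$-sides stay horizontal. With this metric $a$ and $b$ are closed geodesics of length $4$, and the commutator, whose drift along the $x,y$-directions is zero, also has a closed geodesic representative of length $4$ (Bridson's computation; one sees it by unfolding the zigzag strip of parallelograms along an axis of $[a,b]$ and finding a straight invariant line). Both tubes can then be honest flat cylinders with isometric boundary circles, and the whole complex stays $2$-dimensional and nonpositively curved. Note that this skew is precisely what reconciles the example with Proposition \ref{prop:commutatortree}: in a $2$-dimensional model the tree factor of the min set of $x$ sees only the components of $a$ and $b$ orthogonal to the axis of $x$ (length $1$, not $4$), so the tree translation lengths of $a,b,[a,b]$ are $1,1,4$ and no contradiction arises; it is the extra rigidity of cube complexes (Lemma \ref{lemma:quasilines}), which forces that drift to vanish, that produces the cubical obstruction and hence the dimension gap. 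If you rewrite your argument, replace the rectangle core and triangulated tubes by the skewed core and flat cylinders, and check the link conditions at the points where the gluing circles meet, as in the paper's Figure \ref{fig:cat0structure}.
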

\begin{proof}
	A CAT(0) complex can be built as in \cite{bridsondimension}. We take 4 tori which are built from rhombi with one side of length 4 and vertical height 1. We can then glue on two cylinders to realise the relations $sas^{-1} = [a, b] = tbt^{-1}$. This is depicted in Figure \ref{fig:cat0structure} 
\end{proof}
\begin{figure}[h!]
	\centering
\begin{tikzpicture}[scale=0.58]
\draw [thick,|-|, dashed](-2, 0) -- (2, 2);
\node [above left] at (0, 1) {$4$};
\draw [thick,->](0,0) -- (2,1)node[above]{$a$};
\draw [thick](2,1) -- (4,2);
\draw [thick,->](0,0) -- (2.5,0)node[above]{$x$};
\draw [thick](1,0) -- (5,0);
\draw [thick,->](4,2) -- (6.5,2)node[above]{$x$};
\draw [thick](6.5,2) -- (9,2);
\draw [thick,->](5,0) -- (7,1)node[above]{$a$};
\draw [thick](7,1) -- (9,2);
\begin{scope}[shift={(8,0)}]
\draw [thick,->](0,0) -- (2,1)node[above]{$b$};
\draw [thick](2,1) -- (4,2);
\draw [thick,->](0,0) -- (2.5,0)node[above]{$x$};
\draw [thick](1,0) -- (5,0);
\draw [thick,->](4,2) -- (6.5,2)node[above]{$x$};
\draw [thick](6.5,2) -- (9,2);
\draw [thick,->](5,0) -- (7,1)node[above]{$b$};
\draw [thick](7,1) -- (9,2);
\draw [thick,|-|, dashed](10, 0) -- (10, 2);
\node [left] at (10, 1) {$1$};
\end{scope}
\begin{scope}[shift={(0, -4)}]
\draw [thick,->](0,0) -- (2,1)node[above]{$a$};
\draw [thick](2,1) -- (4,2);
\draw [thick,->](0,0) -- (2.5,0)node[above]{$y$};
\draw [thick](1,0) -- (5,0);
\draw [thick,->](4,2) -- (6.5,2)node[above]{$y$};
\draw [thick](6.5,2) -- (9,2);
\draw [thick,->](5,0) -- (7,1)node[above]{$a$};
\draw [thick](7,1) -- (9,2);
\end{scope}
\begin{scope}[shift={(8,-4)}]
\draw [thick,->](0,0) -- (2,1)node[above]{$b$};
\draw [thick](2,1) -- (4,2);
\draw [thick,->](0,0) -- (2.5,0)node[above]{$y$};
\draw [thick](1,0) -- (5,0);
\draw [thick,->](4,2) -- (6.5,2)node[above]{$y$};
\draw [thick](6.5,2) -- (9,2);
\draw [thick,->](5,0) -- (7,1)node[above]{$b$};
\draw [thick](7,1) -- (9,2);
\end{scope}
\begin{scope}[shift={(2,-9)}]
\draw [thick,->](0,0) -- (0,1.5)node[left]{$s$};
\draw [thick](0,1) -- (0,3);
\draw [thick,->](0,0) -- (2,0)node[above]{$[a, b]$};
\draw [thick](1,0) -- (4,0);
\draw [thick,->](0,3) -- (2, 3)node[above]{$a$};
\draw [thick](2,3) -- (4,3);
\draw [thick,->](4,0) -- (4,1.5)node[right]{$s$};
\draw [thick](4,1) -- (4,3);
\end{scope}
\begin{scope}[shift={(10,-9)}]
\draw [thick,->](0,0) -- (0,1.5)node[left]{$t$};
\draw [thick](0,1) -- (0,3);
\draw [thick,->](0,0) -- (2,0)node[above]{$[a, b]$};
\draw [thick](1,0) -- (4,0);
\draw [thick,->](0,3) -- (2, 3)node[above]{$b$};
\draw [thick](2,3) -- (4,3);
\draw [thick,->](4,0) -- (4,1.5)node[right]{$t$};
\draw [thick](4,1) -- (4,3);
\end{scope}
\end{tikzpicture}
\caption{The 2-dimensionsal CAT(0) structure for $G$}
\label{fig:cat0structure}
\end{figure}
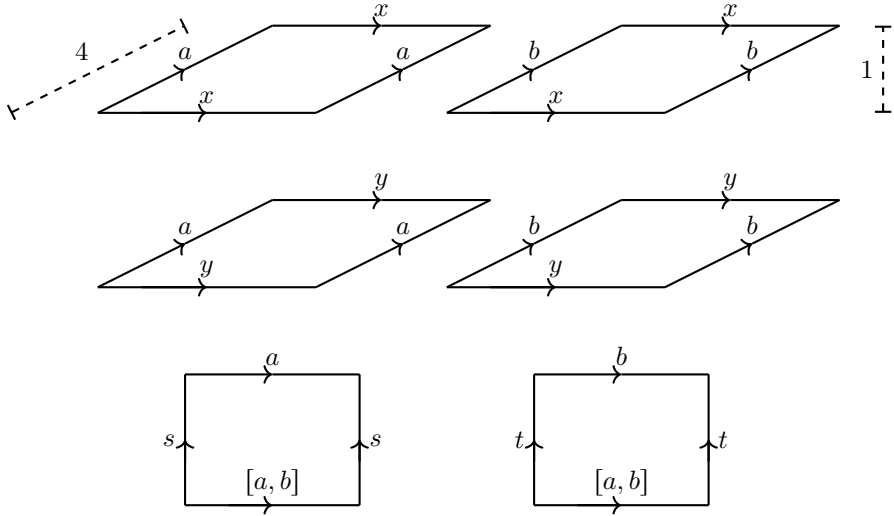

\begin{prop}\label{cubicaldim3}
	The group $G$ has cubical dimension 3. 
\end{prop}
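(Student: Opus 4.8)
The plan is to establish both inequalities: that $G$ acts properly and cocompactly on a CAT(0) cube complex of dimension $3$, and that it admits no such action in dimension $2$. For the upper bound I would cubulate $G$ via its structure as an iterated HNN extension of $F_2\times F_2$ over infinite cyclic subgroups. The factor $F_2\times F_2$ acts properly and cocompactly on a product of two trees $T\times T$, which is $2$-dimensional. Each stable letter conjugates a maximal cyclic subgroup isomorphically onto another maximal cyclic subgroup ($s\langle a\rangle s^{-1}=\langle[a,b]\rangle$ and $t\langle b\rangle t^{-1}=\langle[a,b]\rangle$, with $a,b,[a,b]$ all primitive in the relevant free factor), so the amalgamations are over undistorted cyclic subgroups and $G$ acts on a tree of cube complexes with Bass--Serre underlying tree and fibres copies of $T\times T$. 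Passing to the dual cube complex adds at most one dimension to that of the fibres, giving a proper cocompact action in dimension at most $3$.

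For the lower bound, which is the substantive direction, suppose for contradiction that $G$ acts properly, cocompactly and essentially on a $2$-dimensional CAT(0) cube complex $X$. Translation length is a conjugacy invariant for any isometric action, so the relations $sas^{-1}=[a,b]$ and $tbt^{-1}=[a,b]$ force
\[
|a| = |[a,b]| = |b| =: \ell,
\]
with $\ell>0$ since $a,b$ have infinite order and the cocompact action is by semisimple isometries. I would then exploit the commuting free subgroups: $\langle a,b\rangle$ and $\langle x,y\rangle$ generate a copy of $F_2\times F_2$ in $G$. Assuming $F_2\times F_2$ is undistorted in $G$, it acts cocompactly on a convex subcomplex $Y\subseteq X$ of dimension at most $2$; convexity gives $|g|_Y=|g|_X$ for all $g\in F_2\times F_2$, so the three equal lengths persist on $Y$.

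Applying the product decomposition theorem (Theorem~\ref{thm:productdecomp}) to the action of $F_2\times F_2$ on $Y$, after passing to the essential core, splits $Y$ as $Y_1\times Y_2$ with $\langle a,b\rangle$ acting properly and cocompactly on $Y_1$ and $\langle x,y\rangle$ on $Y_2$. Since dimension is additive for products of cube complexes and both factors are unbounded, each $Y_i$ is $1$-dimensional, i.e. an $\RR$-tree. By the remark following the splitting proposition, every element of $\langle a,b\rangle$ acts elliptically on $Y_2$, so $|g|_{Y_1}=|g|_Y$ for all $g\in\langle a,b\rangle$; in particular $|a|=|b|=|[a,b]|=\ell$ on the $\RR$-tree $Y_1$. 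This directly contradicts Proposition~\ref{prop:commutatortree}, so no $2$-dimensional action exists and the cubical dimension is exactly $3$.

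The hard part will be guaranteeing that the equal translation lengths produced by conjugation in $G$ survive the reduction to a genuine $\RR$-tree action of $\langle a,b\rangle$. Because $s,t$ lie outside $F_2\times F_2$, the equality $|a|=|[a,b]|$ is only visible on a complex on which all of $G$ acts, so I must descend to a subcomplex where the lengths are preserved; this forces me to verify that $F_2\times F_2$ is convex-cocompact (undistorted) in $G$ and that passing to the essential core does not alter the lengths of the hyperbolic elements $a,b,[a,b]$. The decisive leverage is the product decomposition theorem's conclusion that each factor acts elliptically on the complementary factor: this is precisely what stops the central translation direction from shrinking $|a|$ and $|b|$ below $\ell$. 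Without it one recovers only $|a|_{Y_1},|b|_{Y_1}\le\ell=|[a,b]|_{Y_1}$, which is too weak to contradict the Proposition.
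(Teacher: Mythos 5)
Your lower bound has a genuine gap at its load-bearing step: the passage from the subgroup $F_2\times F_2=\langle a,b\rangle\times\langle x,y\rangle$ to a convex subcomplex $Y\subseteq X$ on which it acts cocompactly. First, you never establish that $F_2\times F_2$ is undistorted in $G$ (this is plausible via Britton's lemma, since the associated subgroups $\langle a\rangle$, $\langle b\rangle$, $\langle [a,b]\rangle$ cannot be nested by the stable letters, but it does require an argument --- recall that in $BS(1,2)$ the vertex group is exponentially distorted even though both associated subgroups are undistorted in it). Second, and more fatally, you conflate ``undistorted'' with ``convex cocompact'': in CAT(0) cube complexes the implication \emph{undistorted $\Rightarrow$ acts cocompactly on a convex subcomplex} is false in general. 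The diagonal copy of $F_2$ in $F_2\times F_2$ acting on the product of trees $T\times T$ is quasi-isometrically embedded, yet the median convex hull of its orbit is not within bounded distance of the orbit, so it is not convex cocompact. Since $G$ is not hyperbolic, none of the standard quasiconvexity-implies-convex-cocompactness results apply, and without $Y$ your entire reduction (the splitting $Y=Y_1\times Y_2$, the preservation of translation lengths, and the final contradiction with Proposition~\ref{prop:commutatortree}) has nothing to run on. The paper's proof avoids exactly this obstacle: instead of seeking an invariant convex core for $F_2\times F_2$, it works inside $\minset(x)$, which is \emph{automatically} convex and invariant under the centraliser of $x$, computed by normal forms to be $\langle a,b,x\rangle$. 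Two-dimensionality forces $\minset(x)=T\times\RR$ with $T$ a tree, and the highest-abelian/quasi-line analysis (Lemma~\ref{lemma:quasilines} and \cite{wise_cubical}) shows $a$ and $b$ translate orthogonally to the $\RR$-direction, so $\langle a,b\rangle$ acts properly on $T$ realising $|a|=|b|=|[a,b]|$; this is the same endgame as yours, but reached through a subcomplex whose convexity is free. Your instinct that the equal translation lengths and the elliptic action on the complementary factor are the decisive points is right, but the route you chose to get there cannot be repaired by the tools you invoke.

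Your upper bound is also not adequate as stated: ``passing to the dual cube complex adds at most one dimension to that of the fibres'' is not a theorem, and the naive tree-of-spaces gluing fails in the cubical category precisely because of a length mismatch --- in the Cayley-graph tree of $\langle a,b\rangle$ the element $a$ has translation length $1$ while $[a,b]$ has translation length $4$, so no cubical annulus can realise the relation $sas^{-1}=[a,b]$ along those axes. The paper's explicit complex resolves this by cubulating the once-punctured torus as a $4\times 4$ grid of squares with a unit square removed, so that $a$, $b$ and $[a,b]$ all have combinatorial length $4$ and the two cylinders glue along locally convex circles; the product with the figure-eight graph then gives the $3$-dimensional nonpositively curved cube complex. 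You should either reproduce such an explicit construction or cite a cubulation result that genuinely controls dimension.
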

\begin{proof}
	We can construct a 3-dimensional cube complex by taking a torus with one boundary component cubulated below. Taking the product with a figure 8 graph and gluing on the two cylinders. This is depicted in Figure \ref{fig:cubecomplex}. 
	
	We must now show that it cannot have cubical dimension 2. 
	
	Assume that $G$ acts properly cocompactly on a 2-dimensional CAT(0) cube complex $X$. By passing to a subcomplex we can assume that the action is essential.
	
	Consider the group $\langle a, x\rangle$, this is a highest Abelian subgroup. Thus there is a product of quasi-lines which this group stabilises. Since $X$ is 2-dimensional, this product is actually a product of 2 lines. Similarly the groups $\langle a, y\rangle, \langle b, x\rangle$ and $\langle b, y\rangle$ have this property. 
	
	We can get a basis for a finite index subgroup which acts as the standard product action on these lines. Using \cite{wise_cubical} as in the proof of Lemma \ref{lemma:quasilines} we see that the elements $a, b, x, y$ all have cubical axes. 
	
	Consider the min set of $x$. Since we are in a 2-dimensional cube complex this min set is of the form $T\times \RR$ for some tree $T$. The centraliser of $t$ stabilises this min set. An application of normal forms show that this centraliser is $\langle a, b, t\rangle$. The elements $a$ and $b$ translate in a direction orthogonal to the $\RR$ direction. This means that $\langle a, b\rangle$ stabilises $T\times \{0\}$ in this splitting. This action also realises the translation lengths of $a$, $b$ and $[a, b]$. However, this contradicts Proposition \ref{prop:commutatortree}. We conclude that this group must have cubical dimension $>2$. 
\end{proof}
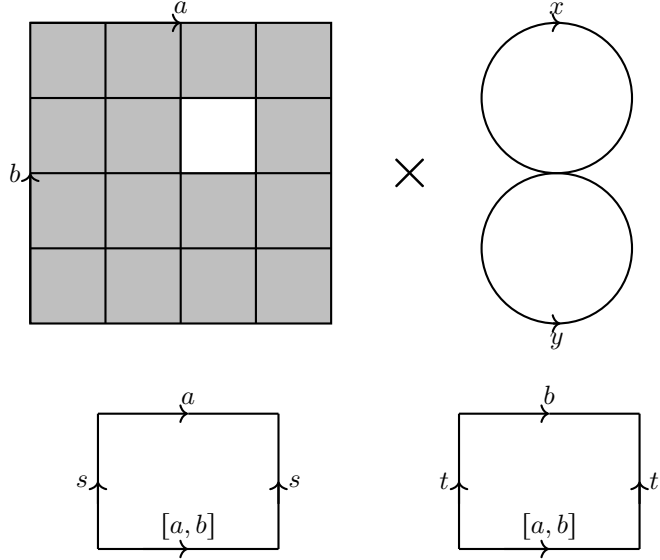
\begin{figure}
	\centering
	\begin{tikzpicture}
		\draw [fill=lightgray, thick] (0,0) rectangle (4,4);
		\draw [thick] (1, 0) -- (1, 4);
		\draw [thick] (2, 0) -- (2, 4);
		\draw [thick] (3, 0) -- (3, 4);
		\draw [thick] (0, 1) -- (4, 1);
		\draw [thick] (0, 2) -- (4, 2);
		\draw [thick] (0, 3) -- (4, 3);
		\draw [thick, ->] (0, 4) -- (2, 4) node[above]{$a$};
		\draw [thick, ->] (0, 0) -- (0, 2) node[left]{$b$};
		\draw [fill=white, thick] (2,2) rectangle (3,3);
		\node [left] at (5.5, 2) {\Huge$\times$};
		\draw [thick] (7, 3) circle [radius=1];
		\draw [thick] (7, 1) circle [radius=1];
		\node [above] at (7, 4) {$x$};
		\node [below] at (7, 0) {$y$};
		\draw [thick, ->] (6.95, 0) -- (7.05, 0);
		\draw [thick, ->] (6.95, 4) -- (7.05, 4);
		
		\begin{scope}[scale=0.6,shift={(1.5,-5)}]
		\draw [thick,->](0,0) -- (0,1.5)node[left]{$s$};
		\draw [thick](0,1) -- (0,3);
		\draw [thick,->](0,0) -- (2,0)node[above]{$[a, b]$};
		\draw [thick](1,0) -- (4,0);
		\draw [thick,->](0,3) -- (2, 3)node[above]{$a$};
		\draw [thick](2,3) -- (4,3);
		\draw [thick,->](4,0) -- (4,1.5)node[right]{$s$};
		\draw [thick](4,1) -- (4,3);
		\end{scope}
		\begin{scope}[scale=0.6,shift={(9.5,-5)}]
		\draw [thick,->](0,0) -- (0,1.5)node[left]{$t$};
		\draw [thick](0,1) -- (0,3);
		\draw [thick,->](0,0) -- (2,0)node[above]{$[a, b]$};
		\draw [thick](1,0) -- (4,0);
		\draw [thick,->](0,3) -- (2, 3)node[above]{$b$};
		\draw [thick](2,3) -- (4,3);
		\draw [thick,->](4,0) -- (4,1.5)node[right]{$t$};
		\draw [thick](4,1) -- (4,3);
		\end{scope}
		
	\end{tikzpicture}
	\caption{A 3-dimensional CAT(0) cubical structure for $G$. }
	\label{fig:cubecomplex}
\end{figure}

\begin{thmk}
	For each $n\geq 1$. There exists a group of CAT(0) dimension $2n$ but cubical dimension $3n$. 
\end{thmk}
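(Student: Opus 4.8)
The plan is to take $G_n = G \times \dots \times G$, the $n$-fold direct product of the group $G$ from Propositions \ref{dimension2} and \ref{cubicaldim3}, and to show that $G_n$ has CAT(0) dimension exactly $2n$ and cubical dimension exactly $3n$. The two upper bounds are immediate from the explicit complexes already constructed: the $n$-fold product of the $2$-dimensional CAT(0) complex of Figure \ref{fig:cat0structure} is a $2n$-dimensional CAT(0) complex admitting a proper cocompact $G_n$-action, and likewise the $n$-fold product of the $3$-dimensional cube complex of Figure \ref{fig:cubecomplex} realises a proper cocompact action on a $3n$-dimensional cube complex. So the content is in the matching lower bounds.

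For the CAT(0) lower bound I would argue cohomologically. The group $G$ is torsion-free, being built from free groups by HNN extensions over $\ZZ$, hence so is $G_n$, and any proper action of a torsion-free group on a CAT(0) space is free. Since $\langle a, x\rangle \cong \ZZ^2$ we have $\ZZ^{2n} < G_n$, so $\mathrm{cd}(G_n) \geq 2n$. On the other hand a free cocompact action on a contractible space of (covering) dimension $d$ forces $\mathrm{cd}(G_n) \le d$, so any CAT(0) space on which $G_n$ acts geometrically has dimension $\ge 2n$. Combined with the upper bound, the CAT(0) dimension is exactly $2n$.

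The cubical lower bound is where the product decomposition theorem does the work, and it is the crux of the argument. First I would check that $G$ itself has AIP. The subgroups $\langle a, x\rangle$ and $\langle b, y\rangle$ are each isomorphic to $\ZZ^2$; they are highest because $G$ has CAT(0) dimension $2$, so by the flat torus theorem no abelian subgroup has rank $>2$; and using the splitting $\langle x, y\rangle \times \langle a, b\rangle = F_2 \times F_2$ embedded in $G$ one sees at once that $\langle a, x\rangle \cap \langle b, y\rangle = \{e\}$, since $x,y$ (resp. $a,b$) freely generate. Hence $G$ has AIP, and by the Proposition on products so does $G_n$. Now suppose $G_n$ acts properly and cocompactly on a CAT(0) cube complex $Y$. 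By \cite{caprace_cubical} we may pass to the essential core, a convex $G_n$-invariant subcomplex $X \subseteq Y$ with $\dim X \le \dim Y$ on which the action is essential. Theorem \ref{thm:productdecomp} then gives $X = X_1 \times \dots \times X_n$ with each factor $G$ acting properly and cocompactly on $X_i$. Since the cubical dimension of $G$ is $3$ by Proposition \ref{cubicaldim3}, each $X_i$ has dimension at least $3$, and as dimension is additive under products of cube complexes we conclude $\dim Y \ge \dim X = \sum_{i} \dim X_i \ge 3n$. Thus the cubical dimension of $G_n$ is exactly $3n$, giving a gap of $n$.

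The main obstacle is the cubical lower bound, and specifically ensuring that the hypotheses of Theorem \ref{thm:productdecomp} are genuinely met: the AIP verification for $G$ together with its inheritance by $G_n$, and the reduction to an essential action without increasing the dimension. Once the action on $X$ is decomposed as a product the remainder is bookkeeping, since the minimal-dimension obstruction for a single factor $G$ has already been established in Proposition \ref{cubicaldim3}.
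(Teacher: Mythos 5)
Your proposal is correct and follows essentially the same route as the paper: cohomological dimension gives the CAT(0) lower bound, products of the explicit complexes from Propositions \ref{dimension2} and \ref{cubicaldim3} give both upper bounds, and the cubical lower bound comes from passing to the essential core and applying Theorem \ref{thm:productdecomp} so that Proposition \ref{cubicaldim3} forces each factor to have dimension at least $3$. In fact you are somewhat more careful than the paper, which never explicitly verifies that $G$ has AIP (a hypothesis its product decomposition machinery requires, though it is dropped from the statement of Theorem \ref{thm:productdecomp}); your check via $\langle a,x\rangle \cap \langle b,y\rangle = \{e\}$ fills in exactly the detail the paper leaves implicit.
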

\begin{proof}
	We will show that the group $G^n$ has CAT(0) dimension $2n$, but cubical dimension $3n$. 
	
	The group $G^n$ has cohomological dimension $2n$, thus it's CAT(0) dimension is at least $2n$. Taking a product of the spaces from Proposition \ref{dimension2} realises the CAT(0) dimension. 
	
	Let $X$ be any cube complex upon which $G^n$ acts properly cocompactly. By passing to a subcomplex we can assume that the action is essential. Thus, using Theorem \ref{thm:productdecomp} we get a splitting of $X$ as a product $X_1\times \dots \times X_n$. Where each factor inherits a $G$ action. By Proposition \ref{cubicaldim3}, we see that each factor must have dimension at least 3 and $X$ has dimension at least $3n$. We can realise the bound of $3n$ by taking the cube complex constructed in Proposition \ref{cubicaldim3}. 
\end{proof}

\section{Manifold examples}

In this section we will find aspherical manifolds with arbitrary gaps between their CAT(0) dimension and cubical dimension. We begin by finding examples using the product decomposition theorem. To start with we build on \cite{taoli}, in which there are examples of hyperbolic 3-manifolds which are not homeomorphic to any 3-dimensional CAT(0) cube complex. We give examples of 3-manifold groups which cannot act geometrically on any 3-dimensional cube complex. 

\begin{thm}\label{prop:3manifoldfinitegap}
	There exists hyperbolic 3-manifold groups with a finite gap between their CAT(0) dimension and their cubical dimension. 
\end{thm}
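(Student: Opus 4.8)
The plan is to combine Tao Li's manifolds with a rigidity argument that turns a hypothetical $3$-dimensional cubical action into a homeomorphism, contradicting \cite{taoli}. Concretely, let $M$ be one of the closed hyperbolic $3$-manifolds of \cite{taoli} that is not homeomorphic to any nonpositively curved cube complex of dimension $3$ (equivalently, whose universal cover is not a $3$-dimensional CAT(0) cube complex compatibly with the $\pi_1$-action), and set $G = \pi_1(M)$. First I would pin down the CAT(0) dimension. Since $M$ is aspherical, $G$ is a torsion-free Poincar\'e duality group of dimension $3$, so its cohomological dimension is $3$ and hence its CAT(0) dimension is at least $3$; the geometric action on $\mathbb{H}^3$ shows it is exactly $3$. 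By \cite{bergeronwise} the group $G$ is cubulated, so it acts properly and cocompactly on some CAT(0) cube complex, whence its cubical dimension is finite and at least $3$.

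The heart of the argument is to rule out cubical dimension equal to $3$. Suppose for contradiction that $G$ acts properly and cocompactly on a $3$-dimensional CAT(0) cube complex $X$. Passing to an essential subcomplex as in \cite{caprace_cubical} we may assume the action is essential, and since $G$ is torsion-free the action is free. Then $Y = X/G$ is a compact, aspherical, nonpositively curved cube complex of dimension $3$ with $\pi_1(Y) \cong G$, i.e. a finite $K(G,1)$ of dimension $3$ realised as an NPC cube complex.

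The key step is to show that $Y$ is in fact a closed $3$-manifold. Because $G$ is a $\mathrm{PD}(3)$ group, $Y$ satisfies Poincar\'e duality, and I would use the cube-complex structure to promote this to the local statement that $Y$ is a homology manifold: the link of each vertex must be a homology $2$-sphere, the link of each edge a circle, and so on down the skeleta. In dimension $3$ this local data forces the vertex links to be genuine $2$-spheres (a homology-manifold cube complex of dimension $\le 3$ is a topological manifold, since its $1$- and $2$-dimensional links are already honest manifolds), so $Y$ is a closed aspherical $3$-manifold with $\pi_1(Y)\cong\pi_1(M)$. Since closed aspherical $3$-manifolds are determined up to homeomorphism by their fundamental groups (via geometrization, together with Mostow rigidity in the hyperbolic case), $Y$ is homeomorphic to $M$, exhibiting $M$ as a $3$-dimensional nonpositively curved cube complex and contradicting \cite{taoli}. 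Hence the cubical dimension of $G$ is at least $4$, while its CAT(0) dimension is $3$, giving a finite nonzero gap.

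The main obstacle is the bridge of the previous paragraph, namely upgrading the \emph{global} Poincar\'e duality supplied by $G$ to the \emph{local} homology-manifold condition on $Y$; this implication is not automatic for general $\mathrm{PD}(3)$ complexes, since duality of the space is a global statement while the homology-manifold condition is local. I expect the essentialness of the action (no free faces, every top cube genuinely three-dimensional) to be exactly what makes the upgrade go through, and verifying that the vertex links are honest spheres rather than merely homology spheres is the delicate point. Everything else — the CAT(0) dimension computation, the finiteness of the cubical dimension, and the final homeomorphism $Y \cong M$ — is bookkeeping with the cubulation result \cite{bergeronwise}, the essentialisation of \cite{caprace_cubical}, and $3$-manifold rigidity.
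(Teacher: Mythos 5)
There is a genuine gap, and it sits exactly where you flagged it: the passage from the \emph{global} Poincar\'e duality of $G$ to the \emph{local} claim that the links in $Y = X/G$ are (homology) spheres. This implication is false for general compact aspherical $3$-complexes: attaching a cancelling pair of a $2$-cell and a $3$-cell to $M$ (an elementary expansion) produces a compact aspherical $3$-complex with the same $\mathrm{PD}(3)$ fundamental group that is certainly not a manifold, so no argument using only duality of the group can force the manifold conclusion. Your hope that essentialness rescues the step is not substantiated: essentialness (in the Caprace--Sageev sense) does rule out finite ``flaps'', since a hyperplane dual to a flap has a bounded halfspace, but it is not at all evident that it rules out cocompact branching phenomena, and you give no argument. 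Since the entire contradiction rests on identifying $Y$ with $M$ via geometrization/Mostow, and that identification needs $Y$ to be a closed $3$-manifold, the unproven manifold-recognition step is not bookkeeping --- it is the whole theorem. Note also that quoting Tao Li's result in the form ``$M$ is not homeomorphic to a $3$-dimensional NPC cube complex'' as a black box is precisely what cannot be done here: the distinction between ``$\pi_1(M)$ acts geometrically on a $3$-dimensional cube complex'' and ``$M$ is homeomorphic to one'' is the gap this theorem is meant to close.

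The paper closes it by a completely different route that never attempts manifold recognition and never assumes the cube complex is $3$-dimensional. Given any proper, cocompact, essential action of $G$ on a CAT(0) cube complex $X$, take a hyperplane with quasiconvex stabiliser $H$; its limit set is a proper subset of $\partial G = S^2$, so one can pass to the domain of discontinuity and, by Ahlfors' finiteness theorem, extract a quasiconvex surface subgroup $N \leq H$, giving an essential plane $C \subset \mathbb{H}^3$. The manifolds are chosen (long Dehn fillings, as in \cite{taoli}) so that every such surface has four translates that pairwise intersect transversely; pulling this back, four translates of the original hyperplane pairwise cross in $X$, and by the Helly property for hyperplanes all four cross simultaneously, producing a $4$-cube. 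Hence \emph{every} essential cocompact cubulation has dimension at least $4$, with no appeal to the structure of the quotient, to geometrization, or to Mostow rigidity. One point in your write-up that the paper's proof does not address, and which you handle correctly, is the finiteness of the cubical dimension via \cite{bergeronwise}; that observation is worth keeping, but it does not repair the central gap.
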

\begin{proof}
	Let $\bar{M}$ be an orientable and irreducible 3-manifold whose boundary is an incompressible torus that does not contain any closed, nonperipheral, embedded, incompressible surfaces.
	For example, $\bar{M}$ could be the figure-8 knot complement. 
	Let $M$ be a hyperbolic 3-manifold obtained by a Dehn filling of $\bar{M}$ that has the 4-plane property.
	This manifold exists by \cite{taoli}, when the filling is long enough. 

	Let $G = \pi_1(M)$.
	Then $G$ acts properly, cocompactly, and by isometries on $\mathbb{H}^3$.
	This shows that $G$ has CAT(0) dimension 3. 

	We will now show that such a manifold has cubical dimension $\geq4$. 
	Let $X$ be a CAT(0) cube complex with a proper cocompact and essential $G$ action. 
	Let $\mathcal{H}$ be a hyperplane with stabiliser $H$
	This is a quasiconvex subgroup of $G$. 
	Consider $\partial \mathcal{H} = \partial H\subset \partial G = S^2$. 
	Since the action is essential we know that $\partial H\neq S^2$. 
	Thus we can consider the domain of discontinuity $\Omega$ for $H\curvearrowright S^2$.
	Let $D$ be a disc in $\Omega$ and $N\leq H$ be the stabiliser of $\partial D = S^1$.
	Then $N$ is a quasiconvex surface subgroup of $H$ \cite{ahlfors}. 
	We now obtain a copy $C$ of $\mathbb{H}^2\subset \mathbb{H}^3$ stabilised by $N$. 
	
	We can translate this copy of $C$ by the group action. 
	We now that at least 4 copies of $C$ will intersect transversely \cite{taoli}.
	In $S^2$ we see that three translates $D_1, D_2, D_3$ of $\partial D$ intersect $D$ non-trivially. 
	Consider the corresponding three translates $P_1, P_2, P_3$ of $P = \partial H$ in $S^2$. 
	Since $D$ is a disc in $\Omega$ and all the the translates $P_i$ intersect $D$ non-trivially, we see that $P_i \neq P$ for all $i$. 
	Similarly, we see that $P_i\neq P_j$. 
	Thus we have 4 translates of $P_i$ that intersect each other. 
	This corresponds to 4 translates of $H$ that intersect each other in $X$. 
	Since hyperplanes satisfy the Helly property we see that they all intersect.
	Thus we can find a cube in which these hyperplanes intersect.
	The 4 hyperplanes are dual to edges of this cube and thus $X$ has dimension at least 4. 
\end{proof}

\begin{thmk}
	There are manifolds $M_n$ such that the cubical dimension of $\pi_1(M_n)$ is at least $4n$ and the CAT(0) dimension is $3n$. 
\end{thmk}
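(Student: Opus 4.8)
The plan is to take the $n$-fold product of the hyperbolic $3$-manifold $M$ produced in Theorem \ref{prop:3manifoldfinitegap}. Concretely, set $M_n = M\times\dots\times M$ ($n$ factors), a closed aspherical manifold of dimension $3n$ whose fundamental group is $\pi_1(M_n)\cong G^n$, where $G=\pi_1(M)$. Since $G$ acts freely, cocompactly and by isometries on $\mathbb{H}^3$, the product $G^n$ acts freely and cocompactly on $(\mathbb{H}^3)^n$, which is CAT(0) of dimension $3n$; this gives the upper bound for CAT(0) dimension.

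For the matching lower bound I would argue via cohomological dimension. As $M$ is a closed orientable aspherical $3$-manifold we have $H^3(G;\mathbb{Q})\cong\mathbb{Q}\neq 0$, so by the K\"unneth formula (over a field) $H^{3n}(G^n;\mathbb{Q})\neq 0$, whence $\mathrm{cd}(G^n)\geq 3n$. Because the CAT(0) dimension of a group is bounded below by its cohomological dimension, and the action on $(\mathbb{H}^3)^n$ realises dimension $3n$, the CAT(0) dimension of $G^n$ is exactly $3n$.

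For the cubical dimension, let $X$ be any CAT(0) cube complex carrying a proper cocompact action of $G^n$; passing to the essential core of Caprace--Sageev, I may assume this action is essential. Each factor $G$ is a non-elementary hyperbolic group, hence has AIP, so by the proposition on products of groups with AIP the whole group $G^n$ has AIP and Theorem \ref{thm:productdecomp} applies. This yields a splitting $X=X_1\times\dots\times X_n$ in which the $i$-th copy of $G$ acts properly and cocompactly on $X_i$. Replacing each $X_i$ by its essential core, which does not increase dimension and still supports a proper cocompact \emph{essential} $G$-action, Theorem \ref{prop:3manifoldfinitegap} forces $\dim X_i\geq 4$, and therefore $\dim X=\sum_{i=1}^n \dim X_i\geq 4n$. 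Since $G$ is cubulated by \cite{bergeronwise}, the product $G^n$ acts properly cocompactly on a finite-dimensional cube complex, so its cubical dimension is a well-defined integer that is at least $4n$.

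The crux here is not a deep obstacle but the bookkeeping point of making the essentiality hypothesis of Theorem \ref{prop:3manifoldfinitegap} available on each factor after invoking the product decomposition; this is precisely why the argument routes through essential cores, which only decrease dimension and so preserve the lower bound we are after. The remaining verifications -- the K\"unneth computation and the assembly of the factor-wise bounds into $\dim X\geq 4n$ -- are routine.
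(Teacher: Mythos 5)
Your proof is correct and follows essentially the same route as the paper: take $M_n = M^n$, realise the CAT(0) dimension $3n$ by the action on $(\mathbb{H}^3)^n$, and obtain the cubical lower bound $4n$ by applying Theorem \ref{thm:productdecomp} to split any cube complex with a proper cocompact $\pi_1(M_n)$-action and then invoking Theorem \ref{prop:3manifoldfinitegap} on each factor. The extra details you supply --- verifying AIP for the hyperbolic factors, the K\"unneth argument for the CAT(0) lower bound, and the essential-core bookkeeping --- are points the paper leaves implicit, not a different approach.
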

\begin{proof}
	Let $M$ be the manifold constructed in Proposition \ref{prop:3manifoldfinitegap}. Let $M_n = M\times\dots\times M$ be a product of $n$ copies of $M$. 
	
	We can see that the CAT(0) dimension of $M_n$ is $3n$ since it acts properly cocompactly on $(\mathbb{H}^3)^n$. 
	
	To see that the cubical dimension is at least $4n$, assume that $\pi_1(M_n)$ acts properly cocompactly on a cube complex $X$. Theorem \ref{thm:productdecomp} implies that there is a subcomplex of $X$ that splits as a product $X_1\times\dots \times X_n$ and $\pi_1(M)$ acts properly cocompactly on $X_i$, thus $X_i$ must have dimension at least $4$ by Proposition \ref{prop:3manifoldfinitegap}. Thus we conclude that $X$ had cubical dimension at least $4n$. 
\end{proof}

We finish this section with an observation relating to the examples constructed in \cite{jankiewicz}. Namely, the examples can be used to construct aspherical 4-manifolds with an arbitrary dimension gap. 

\begin{thmk}
	There exists a family $M_n$ of closed aspherical 4-manifolds such that $G_n = \pi_1(M_n)$ does not act on a cube complex of dimension less than $n$. 
\end{thmk}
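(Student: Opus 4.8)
The plan is to combine the small-cancellation groups of \cite{jankiewicz} with the reflection group trick of \cite{davis}. From \cite{jankiewicz} I take, for each $n$, a group $J_n$ that is hyperbolic and of low dimension, having a finite aspherical presentation $2$-complex $K_n$, and that admits no proper action on any CAT(0) cube complex of dimension less than $n$; thus $J_n$ has bounded geometric dimension but unbounded cubical dimension. The goal is to promote each $J_n$ to the fundamental group of a closed aspherical $4$-manifold $M_n$ in such a way that the cubical lower bound survives. The survival mechanism is elementary: if $J_n$ embeds as a subgroup of $G_n = \pi_1(M_n)$ and $G_n$ acts properly on a CAT(0) cube complex $X$, then the restriction of this action to $J_n$ is again proper and $\dim X$ is unchanged, so $\dim X \geq n$. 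Hence it suffices to realise $J_n$ as a subgroup of the fundamental group of a closed aspherical $4$-manifold.

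To produce $M_n$ I would first realise $J_n$ as the fundamental group of a compact aspherical $4$-manifold with boundary $W_n$: thicken the finite aspherical $2$-complex $K_n$ to a regular neighbourhood in dimension $4$, so that $W_n$ deformation retracts onto $K_n$ and is therefore aspherical with $\pi_1(W_n) = J_n$. I would then apply the reflection trick of \cite{davis} to $W_n$, reflecting across a mirror structure on $\partial W_n$, to obtain a closed aspherical $4$-manifold $M_n$ together with a retraction $\rho\colon \pi_1(M_n) \to \pi_1(W_n) = J_n$ splitting the inclusion. In particular the inclusion $J_n \hookrightarrow \pi_1(M_n)$ is injective, so $J_n$ is a subgroup of $G_n := \pi_1(M_n)$, and $M_n$ has the required dimension because Davis's construction preserves both dimension and asphericity.

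Combining the two steps: given any proper action of $G_n$ on a CAT(0) cube complex $X$ of dimension $d$, restriction along $J_n \hookrightarrow G_n$ yields a proper action of $J_n$ on the same complex, whence $d \geq n$ by the defining property of $J_n$. Therefore $G_n = \pi_1(M_n)$ does not act properly on any cube complex of dimension less than $n$, as claimed.

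The main obstacle is the first part of the construction of $M_n$, namely keeping the ambient dimension equal to $4$. A finite $2$-complex thickens freely in dimension $5$ but not in general in dimension $4$, where the van Kampen self-intersection obstruction can be nonzero; so I must either exploit the explicit form of the presentations coming from \cite{jankiewicz}, choosing a presentation whose $2$-complex admits a $4$-dimensional thickening, or replace the naive thickening by a relative hyperbolization argument that outputs an aspherical $4$-manifold directly. A secondary point to verify is that the lower bound of \cite{jankiewicz} is genuinely a statement about proper actions, and not merely about geometric (proper and cocompact) ones, since the restriction of a cocompact action to the subgroup $J_n$ need not remain cocompact.
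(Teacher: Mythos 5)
Your route is the same as the paper's: take the Jankiewicz group $H_n$ (your $J_n$) with its finite aspherical presentation $2$-complex, thicken it to a compact aspherical $4$-manifold with boundary, apply the Davis reflection trick to produce a closed aspherical $4$-manifold $M_n$ whose fundamental group retracts onto $H_n$, and conclude by restricting any proper action of $\pi_1(M_n)$ on a CAT(0) cube complex to the subgroup $H_n$. Your two cautionary remarks are also on target; in particular the Jankiewicz lower bound is indeed a statement about proper (not necessarily cocompact) actions, which is exactly why the restriction argument goes through.

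The only place your proposal falls short of a proof is the step you yourself flag as the main obstacle: producing the $4$-dimensional thickening. The paper closes this gap with a single citation, to Stallings: every finite $2$-complex is homotopy equivalent to a $2$-complex that embeds in $\RR^4$, so one may embed the classifying space of $H_n$ (the small cancellation presentation complex is aspherical) into $\RR^4$ and take a regular neighbourhood. Since all that is needed is \emph{some} compact aspherical $4$-manifold with fundamental group $H_n$, replacing the presentation complex by a homotopy-equivalent complex is harmless, and the van Kampen obstruction--which obstructs embedding a \emph{fixed} complex--becomes irrelevant after this replacement. So neither a special choice of presentation nor relative hyperbolization is required; with Stallings' embedding theorem inserted at that point, your argument coincides with the paper's.
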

\begin{proof}
	Let $H_n$ be the group constructed in \cite{jankiewicz} which does not act properly on any CAT(0) cube complex of dimension $<n$. The group $H_n$ is a small cancellation group so the presentation 2-complex is a classifying space. We can embed this classifying space into $\RR^4$ \cite{stallings} and take a neighbourhood to get a 4-manifold which is a classifying space. 
	
	Triangulating the boundary $N_n$ of this 4-manifold we can apply the Davis trick \cite{davis} to obtain a closed aspherical 4-manifold $M_n$. The fundamental group $\Gamma_n$ of $M_n$ retracts onto $H_n$. Thus $\Gamma_n$ cannot act properly on any cube complex of dimension less than $n$ as we would then get a proper action of $H_n$. 
\end{proof}

The group from the previous theorem acts properly on a cube complex. Let $X_n$ be a cube complex upon which $H_n$ acts properly and let $W_{n}$ be the Davis complex for the universal cover of $N_n$. Then the group $\Gamma_n$ acts properly on $X_n\times W_n$. However, this action is not cocompact. We hope that in the future we can promote these examples to cocompactly cubulated groups.

\bibliographystyle{plain}
\bibliography{bib}

\end{document}